\newtheorem{thm}{Theorem}[section]
\newtheorem{prop}{Proposition}[section]
\newtheorem{lem}{Lemma}[section]
\newtheorem{rmq}{Remark}[section]
\newcommand{\R}{\mathbb{R}}
\numberwithin{equation}{section}
\newcounter{exercice}
\begin{document}
\title{Construction of a solution for the two-component radial Gross-Pitaevskii system with a large coupling  parameter}

\maketitle

\author{\hspace{1cm} \large Jean-Baptiste Casteras \footnote{D\'epartement de Math\'ematique, Universit\'e libre de Bruxelles, Campus de la Plaine CP 213, Bd. du Triomphe, 1050 Bruxelles, Belgium. E-mail: jeanbaptiste.casteras@gmail.com.}
\and  \hspace{2cm} Christos Sourdis \footnote{Department of Mathematics, University of Athens, Greece. E-mail: sourdis@uoc.gr} }


\begin{abstract}
We consider strongly coupled competitive elliptic systems that arise in the study of two-component Bose-Einstein condensates.
As the coupling parameter tends to infinity, solutions  that remain uniformly bounded are known
to converge  to a segregated limiting profile, with the difference of its components
satisfying
a limit scalar  PDE. In the case of radial symmetry, under natural non-degeneracy assumptions on a solution of the limit problem,
we establish by a perturbation argument  its persistence as a solution to the elliptic system.
\end{abstract}

 \ \ \ \ \  {\it Keywords}: Gross-Pitaevskii system, phase separation, matched asymptotics\newline

\ \ \ \ \ {\bf AMSC}: 34B08, 34B15, 34E20

\section{Introduction}\subsection{The problem}
We consider coupled elliptic systems of the form
\begin{equation}\label{eqGen}
\Delta u_i=f_i(u_i)+gu_i\sum_{j\neq i}a_{ij}u_j^2, \ \textrm{in}\ \Omega;\ u_i=0\ \textrm{on}\ \partial \Omega,
\end{equation}
$i=1,\cdots,m$,
where $f_i$ are smooth functions with \begin{equation}\label{eqf0}f_i(0)=0 ,\end{equation} $g$ is a real parameter, $a_{ij}$ are nonnegative constants such that $a_{ii}>0$, $a_{ij}=a_{ji}$, $i,j=1,\cdots,m$, and $\Omega$ is a bounded smooth $N$-dimensional domain. Systems of this form arise in the study of multi-component Bose-Einstein condensates.
 In this context, the reaction terms are typically
\begin{equation}\label{eqreaction}
f_i(u)=g_iu^3-\mu_i u,\ g_i,\mu_i \in (-\infty,+\infty).
\end{equation} The coupling parameter $g$ measures the interaction between the different components in the mixture:
if $g<0$ they attract each other, whereas if $g>0$ they repel each other.
 On the other hand, the coefficients $g_i$ in (\ref{eqreaction}) measure the interaction between atoms in the same $i$-th component:
 if $g_i<0$ there is attraction, whereas if $g_i>0$ there is repulsion.

 The function $u_i$ represents the density corresponding to the $i$-th component in the mixture, and thus is naturally assumed to be positive. Nevertheless, the mathematical interest to (\ref{eqGen}) also extends to sign-changing solutions. In passing, we note that (\ref{eqGen}) has variational structure as it comes from a Gross-Pitaevskii energy.

In the following, we will consider the case of strong repulsion (or competition), that is $g\gg 1$. Moreover, we will focus on the case of two components, but first let us recall some of the main known results for the case of $m$ components.

\subsection{Known results}
In the seminal paper \cite{norisCPAM} (see also \cite{dancer1} for the corresponding parabolic problem), it was shown that if a family of solutions $\textbf{u}_g=(u_1^{g},\cdots,u_m^{g})$ of (\ref{eqGen}) with positive components remains bounded in $L^{\infty}(\Omega)$ as $g \to +\infty$, then it also remains bounded in $C^\alpha(\bar{\Omega})$ for any $\alpha \in (0,1)$. In fact, it was shown in \cite{SZARMA} that such families $\textbf{u}_g$ remain bounded, uniformly in $g$, even in the Lipschitz norm, at least away from the boundary of the domain.
 We also refer to \cite{WW1} for a related result in planar domains, and to \cite{CaffLin} for solutions that minimize the associated energy.
 Hence, thanks to a well known compact imbedding, possibly up to a subsequence $g_n\to +\infty $,  such a family converges in   $C^\alpha(\bar{\Omega})$ for any $\alpha <1$ to some limiting configuration $\textbf{u}_\infty=(u_1^{\infty},\cdots,u_m^{\infty})$. In fact, it was shown in \cite{norisCPAM} that the limiting profile has Lipschitz regularity up to the boundary of $\Omega$. Furthermore, the limiting components are segregated, that is their supports are disjoint. In its respective  support, the limiting component $u_i^\infty$  satisfies the following elliptic problem
\begin{equation}\label{eqf}
\Delta u_i^\infty=f_i(u_i^\infty).
\end{equation}
In the language of singular perturbations, the above limit problem  is called the  outer limit problem.


The regularity properties of the sharp interface \[\Gamma=\left\{x\in \bar{\Omega}\ : \ u_1^\infty(x)=\cdots=u_m^\infty(x)=0\right\}\]
were subsequently studied in \cite{tavaresCVPDE}.  It was shown there that $\Gamma$ has properties analogous to the nodal set of eigenfunctions of the Laplacian: there exists $\Sigma \subset \Gamma$ with $\mathcal{H}_{dim}(\Sigma)\leq N-2$ such that $\Gamma \setminus \Sigma$ is a finite union of smooth manifolds (we refer to \cite{zhang} for a detailed description of $\Sigma$). The set $\Sigma$ is referred to as the singular part of the interface $\Gamma$, whereas $\Gamma \setminus \Sigma$ as the regular part.
On each side of   a smooth manifold $M$ that composes the regular part of the interface there is only one nontrivial limiting component.
Moreover, across $M$ the corresponding limiting components, say $u_\infty=u_i^\infty$ and $v_\infty=u_j^\infty$ (it holds $i\neq j$, see \cite{dancer2}), satisfy the following reflection law:
\begin{equation}\label{eqreflect}
|\nabla u_\infty|=|\nabla v_\infty|\ \ \textrm{on}\ M. \end{equation}
We note that the above normal derivatives are nonzero by (\ref{eqf0}), (\ref{eqf}) and Hopf's boundary point lemma.



More refined estimates for the convergence as $g\to +\infty$ have recently been obtained in \cite{SZ} and \cite{wang}.
In particular, it was shown in the former reference that near a point $p$ of $M$, the two corresponding  components $u_g=u_i^g$, $v_g=u_j^g$ ($i \neq j$) that survive as $g\to +\infty$ should behave, to main order, in the following self-similar fashion:
\begin{equation}\label{eqinner}
u_g(x)\sim g^{-\frac{1}{4}}U\left(g^{\frac{1}{4}}\textrm{dist}(x,M)\right),\ \ v_g(x)\sim g^{-\frac{1}{4}}V\left(g^{\frac{1}{4}}\textrm{dist}(x,M)\right),
\end{equation}
 where $\textrm{dist}(\cdot,M)$ stands for the signed distance to $M$, while
the one-dimensional profiles $U(t),V(t)$  depend only on the point $p$
and satisfy
\begin{equation}\label{eqBUlimit}\left\{
\begin{array}{c}
  U''=UV^2 \\
  V''=VU^2
\end{array}\right.
\end{equation}
in the entire real line.
It was shown in \cite{berestycki1,berestycki2} that the above problem has just a 2-parameter family of positive solutions given by
\[
\mu U(\mu t+\tau),\ \mu V(\mu t+\tau),
\]
with scaling parameter $\mu>0$ and translation $\tau \in(-\infty,+\infty)$, for some fixed solution pair $(U,V)$ which satisfies
the mirror reflection symmetry \begin{equation}\label{eqRefUV}U(-t)\equiv V(t),\end{equation} and enjoys the following asymptotic behaviour at respective infinities:
\[
U(t)\to 0 \ \textrm{as}\ t\to -\infty;\ \ U'(t)\to |\nabla u_\infty(p)|>0 \ \textrm{as}\ t\to +\infty.
\]
Notice that the convergence in the previous limits is super-exponentially fast. In fact, it was observed in \cite{aftalionSourdis}
that there is an asymptotic phase $k=k(p)>0$ in the asymptotic behaviour of $U$ at $+\infty$. Combining all the previous information, we deduce that, for $t>0$ large enough,
\begin{equation}\label{missing}U(t)=|\nabla u_\infty (p)| t +k +O(e^{-c_1 t^2})\ \textrm{and}\ V(t)=O(e^{-c_2 t^2}), \end{equation}
for some positive constants $c_1$ and $c_2$. The above relations can be differentiated and, via (\ref{eqRefUV}), provide the corresponding asymptotic behaviour as $t\to -\infty$.



One also expects that the behaviour of solutions for large $g$ near $\Sigma$ should be governed by an equivariant entire solution with polynomial growth of the PDE  version of system (\ref{eqBUlimit}), see \cite{berestycki2,SZentire}, which is usually called the  inner (or blow-up) limit problem.

\subsection{The problem with two-components}
From now on, we will consider the special case of problem (\ref{eqGen}) with $m=2$, which (after a rescaling) we can write as
\begin{equation}\label{eq2comp}
\left\{
\begin{array}{ll}
  -\Delta u + f(u) + guv^2 = 0 & \\
  &\ \textrm{in}\ \Omega; \\
  -\Delta v + h(v) + gvu^2 = 0&
\\
&\\
 u=v=0&\ \textrm{on}\ \partial \Omega,
\end{array}\right.
\end{equation}
for some smooth functions $f$ and $h$  such that $f(0)=h(0)=0$ and $\Omega$ still a bounded, smooth $N$-dimensional domain.

We note that the reflection law (\ref{eqreflect}) implies that the difference
\[w=u_\infty-v_\infty \]
 is smooth across the regular part of the interface.
In fact, it was shown in \cite{dancer2} that this difference is a classical solution of the following limit problem
\begin{equation}\label{eqlimit}
{\Delta w=f(w^+)-h(-w^-)\ \textrm{in}\ \Omega;\ w=0\ \textrm{on}\ \partial \Omega},
\end{equation}
where one writes
\[w=w^++w^-\ \textrm{with}\ w^+\geq 0\ \textrm{and}\ w^-\leq 0.
\]
It is worthwhile mentioning that in the special case where $f\equiv h$ is odd, the above limit problem reduces to
\begin{equation}\label{eqlimRed}
{\Delta w=f(w) \ \textrm{in}\ \Omega;\ w=0\ \textrm{on}\ \partial \Omega}.
\end{equation}

\subsection{The converse problem}
So far we have discussed how one can reach the limit problem (\ref{eqlimit}) (and also (\ref{eqBUlimit})) starting from an appropriate family of solutions to (\ref{eq2comp}) for large $g$. It is also of interest whether one can go in the opposite direction, that is under which conditions do solutions of the limit problem (\ref{eqlimit}) generate corresponding solutions of (\ref{eq2comp}) for large values of $g$.

In \cite{dancer3}, Dancer considered (\ref{eq2comp}) for nonlinearities as in (\ref{eqreaction}) with $g_1,g_2>0$ (with the obvious correspondence with (\ref{eqGen})). It was shown by variational methods that, under appropriate restrictions on $\mu_1,\mu_2$, a certain type of nodal least energy solutions of (\ref{eqlimit}) generate corresponding  solutions with positive components to (\ref{eq2comp}) for large $g$. On the other hand, the authors of \cite{WW2} considered the case
where $g_1=g_2<0$ (say $-1$) and $\mu_1=\mu_2>0$ (say $1$) in a ball in two or three dimensions. In this case, it is well known that, for any integer $m\geq 1$, the (reduced) limit problem
(\ref{eqlimRed}) admits a radial  nodal solution $w_m$ with exactly an $m$ number of sign changes. Using variational methods, they were able to show that each $w_m$ produces a corresponding radial solution of (\ref{eq2comp}) with positive components that shadow respectively $(w_m)^+$ and $-(w_m)^-$ as $g\to +\infty$. A related result in $\mathbb{R}^N$, $N\geq 1$, can be found in \cite{mandel}.
%

At this point let us make a small detour and discuss briefly the analogous elliptic system modeling two competing populations that arises in spatial ecology. In that context,  the coupling terms in both equations of (\ref{eq2comp}) are $guv$, while the nonlinearities $f,h$ are usually of logistic type. Remarkably, uniformly bounded families of solutions to both systems share essentially the same regularity properties (with respect to large $g$), see \cite{conti}.
In particular, they have the same (outer) limit problem (\ref{eqlimit}). For the population problem, it was shown in \cite{dancerDu} by means of a topological  degree theoretic argument that non-degenerate (in the sense that the linearized operator does not have a kernel) nodal solutions $w$ of (\ref{eqlimit}) give corresponding solutions $(u_g,v_g)$ with positive components for the system with large $g$. The key idea for proving this is to consider the difference $u-v$ and note that this leads to a system with only one singularly perturbed equation  (a standard slow-fast system in the language of dynamical systems). Interestingly enough, this result was established without making use of the analogous blow-up limit problem to (\ref{eqBUlimit}). In light of the aforementioned common features of the two systems, it is natural to expect that an analogous converse result should also hold for the condensate problem (\ref{eq2comp}), see \cite{dancer4}.

\subsection{Main result}
We  show that an analogous converse result holds  for the condensate problem (\ref{eq2comp}), provided that we restrict to the radial setting and we impose some extra but milder  non-degeneracy assumptions on the solution of the limit problem (\ref{eqlimit}).

Our main result is the following.

\begin{thm}\label{thm}
Let $\Omega$ be an $N$-dimensional ball or annulus, $N\geq 1$, and let $f,h \in C^{3,1}[0,\infty)$ be such that $f(0)=h(0)=0$.  Suppose that $w$ is a radial  nodal solution of the limit problem
(\ref{eqlimit})
with  one sign change, which is non-degenerate in the radial class in the sense that the associated linearization does not have a nontrivial radially symmetric element in its kernel.
 Moreover, assume that $-w^-$ and $w^+$ are also non-degenerate in the radial class as solutions of (\ref{eqlimit}) in their respective supports.
Then, if $g$ is sufficiently large, there exists a radial solution $(u_g,v_g)$ of (\ref{eq2comp}) with positive components such that
\[ \|v_g+w^- \|_{L^\infty(\Omega)}\leq Cg^{-\frac{1}{4}},\ \ \|u_g-w^+ \|_{L^\infty(\Omega)}\leq Cg^{-\frac{1}{4}},  \]
where the constant $C>0$ is independent of $g$.

If $r_0$ denotes the radius of the sphere where $w$ vanishes, and $(r-r_0)w(r)>0$ for $r\neq r_0$, it holds
\[
\left\{
\begin{array}{c}
u_g(r)=g^{-\frac{1}{4}}U\left(g^{\frac{1}{4}}(r-r_0)\right)+O\left(g^{-\frac{1}{2}}+(r-r_0)^2\right)
\\
v_g(r)=g^{-\frac{1}{4}}V\left(g^{\frac{1}{4}}(r-r_0)\right)+O\left(g^{-\frac{1}{2}}+(r-r_0)^2\right)
\end{array}
\right.
\]
for $|r-r_0|\leq (\ln g)g^{-\frac{1}{4}}$, as $g\to +\infty$, where the pair $(U,V)$ is the unique solution of (\ref{eqBUlimit})
satisfying (\ref{eqRefUV}) and (\ref{missing}) with $u_\infty=w^+$ and $|p|=r_0$.
\end{thm}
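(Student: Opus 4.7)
My plan is to build a multi-scale ansatz $(\bar u_\rho,\bar v_\rho)$ for \eqref{eq2comp} parametrized by a candidate interface radius $\rho$ near $r_0$, then perform a Lyapunov--Schmidt reduction that splits the problem into a contraction mapping on a codimension-one subspace and a scalar bifurcation equation selecting $\rho$. The three non-degeneracy hypotheses will enter at distinct stages: non-degeneracy of $w^+$ (resp.\ $-w^-$) in its own support will let me deform it to $w_\rho^+$ (resp.\ $-w_\rho^-$) via the implicit function theorem and will give outer invertibility of the linearised scalar Dirichlet problems, while the global radial non-degeneracy of $w$ is what makes the scalar bifurcation equation uniquely solvable.

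To construct the ansatz, take $w_\rho^+$ to be the unique radial solution of $\Delta u=f(u)$ in the region $\{r<\rho\}$ (Dirichlet) close to $w^+$, and $-w_\rho^-$ the analogue on the other side. On the inner scale $t=g^{1/4}(r-\rho)$, use $g^{-1/4}\mu_\rho U(\mu_\rho t)$ and $g^{-1/4}\mu_\rho V(\mu_\rho t)$, the dilation $\mu_\rho$ being fixed so that the affine asymptote of $U$ at $+\infty$ matches the slope of $w_\rho^+$ at $\rho$. Glue inner and outer pieces by smooth cutoffs on an intermediate scale $g^{-1/4}\ll\delta\ll 1$. Since the first-order term of the radial Laplacian contributes an $O(g^{-1/4})$ perturbation to \eqref{eqBUlimit}, I would add an explicit next-order inner correction $g^{-1/2}(U_1,V_1)$ solving a linearised inner problem with source $\tfrac{N-1}{r_0}(U',V')$; existence and rapid decay of $(U_1,V_1)$ follow from \eqref{missing}. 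After this, the residual of the ansatz is $O(g^{-1/2})$ in an appropriate weighted norm.

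The core of the argument is the uniform-in-$g$ invertibility of the linearization $L_\rho$ of \eqref{eq2comp} at $(\bar u_\rho,\bar v_\rho)$ modulo a one-dimensional approximate kernel spanned by $(\partial_\rho\bar u_\rho,\partial_\rho\bar v_\rho)$. Far from the interface, $L_\rho$ decouples at leading order into the two scalar Dirichlet linearisations on the supports of $w^\pm$, invertible by hypothesis. Near the interface, after rescaling, $L_\rho$ becomes a small perturbation of the linearisation of \eqref{eqBUlimit}, whose bounded kernel is spanned by a translation mode $(U',V')$ and a scaling mode; the latter is suppressed by the choice of $\mu_\rho$, leaving the translation mode to be absorbed through the parameter $\rho$. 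A matched-asymptotic gluing of inner and outer a priori estimates---using the $e^{-ct^2}$ decay of $V$ away from the interface to control the terms $g\bar u_\rho\bar v_\rho$ and $g\bar v_\rho^2$---should yield a uniform resolvent bound on the orthogonal complement of this kernel, after which a contraction mapping produces a small correction $(\phi_\rho,\psi_\rho)$.

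It remains to select $\rho$ so that the scalar projection of the full equation onto the approximate kernel vanishes. At leading order this projection is $\Theta(\rho):=|\nabla w_\rho^+(\rho)|-|\nabla w_\rho^-(\rho)|$, which vanishes at $\rho=r_0$ by the reflection law \eqref{eqreflect} for $w$ and whose derivative at $r_0$ is nonzero precisely because of the global radial non-degeneracy of $w$ across the interface; the implicit function theorem then yields $\rho=\rho_g=r_0+O(g^{-1/2})$ together with the solution $(u_g,v_g)$ satisfying the stated estimates. I expect the main obstacle to be exactly this uniform invertibility of $L_\rho$: the equations carry singular coefficients (the penalty $g\bar v_\rho^2$ together with the super-exponentially small but $g$-amplified cross-coupling $2g\bar u_\rho\bar v_\rho$), and designing a single weighted space in which both inner and outer a priori bounds are compatible is the delicate matched-asymptotic step, one that relies crucially on the quantitative decay \eqref{missing}.
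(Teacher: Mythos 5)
Your outline has a genuine gap at the matching step, and it is precisely the point the paper identifies as the main difficulty: the asymptotic phase $k>0$ in \eqref{missing}. Your inner ansatz $g^{-1/4}\mu_\rho U(\mu_\rho g^{1/4}(r-\rho))$ has, in the intermediate zone, the affine asymptote $\mu_\rho^2\psi_0(r-\rho)+\mu_\rho g^{-1/4}k$, and by the reflection symmetry \eqref{eqRefUV} the $V$-component has the asymptote $-\mu_\rho^2\psi_0(r-\rho)+\mu_\rho g^{-1/4}k$ on the other side; both intercepts at $r=\rho$ equal $+\mu_\rho g^{-1/4}k\neq 0$. Your outer functions $w_\rho^{\pm}$ are taken with \emph{zero} Dirichlet data at $\rho$, so each outer piece misses its inner counterpart by $O(g^{-1/4})$ at order $s^0$. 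Neither of your two free parameters can repair this: a dilation $\mu_\rho$ only adjusts slopes, and a translation of the interface shifts the two asymptotes' intercepts in \emph{opposite} directions ($\mp\psi_0\xi$), whereas the phase $k$ requires lifting both outer components in the \emph{same} direction. This is why the paper introduces two additional parameters, the boundary values $\delta,\tilde\delta$ of the outer solutions at $r_0$ (see \eqref{outapprox} and the system \eqref{defdelta1}, whose first and third equations read $\delta_1=g^{-1/4}k-\xi\psi_0$, $\tilde\delta_1=g^{-1/4}k+\xi\psi_0$), and it is exactly here -- not in the linear theory -- that the separate non-degeneracy of $w^+$ and $-w^-$ is used. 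With your ansatz the residual in the gluing region is $O(g^{-1/4})$ (amplified by the cutoff derivatives to $O(g^{1/4})$ up to logarithms), so the correction produced by your reduction would be at least comparable to the inner layer itself and the claimed expansion of $(u_g,v_g)$ could not be extracted; moreover the $O(g^{-1/4})$ mismatch is not proportional to the translation mode, so it cannot be absorbed into the bifurcation equation for $\rho$ either.

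Two secondary points. First, you attribute the role of the $w^\pm$ non-degeneracy to ``outer invertibility of the linearised scalar Dirichlet problems''; in the actual argument these hypotheses do not enter the linear analysis at all -- they are needed to solve the outer problems with the perturbed boundary data $\delta,\tilde\delta$ described above. Second, uniform invertibility modulo the translation mode does not follow from invertibility of the two outer Dirichlet linearisations, because the limits of a bounded null sequence in the outer regions satisfy the linearised equations with \emph{unprescribed} data at $r_0$, hence are arbitrary multiples $\lambda_+u_1$, $\lambda_-v_1$. One must derive the transmission relations $\phi_\infty(r_0)+\psi_\infty(r_0)=0$ and $\phi_\infty'(r_0)+\psi_\infty'(r_0)=0$ by testing against \emph{both} the translation mode $(U',V')$ and the linearly growing dilation mode $(tU'+U,tV'+V)$ of the linearisation of \eqref{eqBUlimit} (suitably corrected in $g$), and only then does the non-degeneracy of $w$, in the form $u_1'(r_0)\neq v_1'(r_0)$, force $\lambda_\pm=0$. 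Your plan uses only the translation mode, which yields one relation instead of the two that are needed.
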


Our result was announced in \cite{casterasEquad}.

We will only prove our result in the case where $\Omega$ is the unit ball $B_1$ centered at the origin, the case of an annulus is completely analogous.

As we will describe in more detail in the sequel, our proof relies on a perturbative method. We first combine the outer and inner problems, (\ref{eqlimit}) and (\ref{eqBUlimit}) respectively, to construct a sufficiently good approximate solution to (\ref{eq2comp}) for large $g$ that is valid in the whole domain. Then, we can capture a genuine solution nearby by a fixed point argument owing to appropriate invertibility properties of the associated linearized operator between carefully chosen weighted spaces.

We point out that the separate non-degeneracy assumptions on $-w^-$ and $w^+$ were not present in the previously mentioned result of
\cite{dancerDu} for the population system. As  will become apparent shortly, the underline reason for imposing them is the presence of the positive asymptotic phase $k$ in the asymptotic behaviour of the blow-up profile (recall (\ref{missing})). We point out that there was no such phase present in the analogous blow-up limits for the population problem. Loosely speaking, the outer and inner approximate solutions, given to main order by $(  w^{+},-w^{-} )$ and the pair in the righthand side of  (\ref{eqinner}) with $M=\{|x|=r_0\}$, respectively,
 do not have the phase $k>0$ in common (in the intermediate zone where they must match). Therefore, we need to move the outer solutions towards the inner one by a regular perturbation to compensate for the gap caused by $k>0$ (in principle, the inner solution should control the outer ones). To be able to do so, we need these non-degeneracy assumptions on $-w^-$ and $w^+$.
We remark that the non-degeneracy assumptions for $\pm w^{\pm}$ are much easier to verify in practice (see for instance \cite{Uniq1}) in comparison to that for $w$ which is a sign-changing solution (see \cite{uniq2Tanaka}); see also Section \ref{secapplications} below.

We believe that an analogous result still holds when $w$ changes sign an arbitrary number of times, provided one imposes further analogous non-degeneracy assumptions
to take into account the interaction created by adjacent zeros of $w(r)$ for $1 \ll g<\infty$.

\subsection{Method of proof} As mentioned above, our proof is perturbative and based on the construction of a sufficiently good approximate solution to (\ref{eq2comp}) for large $g$. We will first construct outer approximate solutions of the form
\[
(0,v_{\tilde{\delta}})\approx (0,v_0):=(0,-w^-)\ \ \textrm{and}\ \ (u_\delta,0)\approx (u_0,0):=(w^+,0)
\]
in the ball $B_{r_0}$ and the annulus $B_1\setminus \overline{B_{r_0}}$, respectively, depending on two free small parameters $\tilde{\delta},\delta>0$.
We point out that the aforementioned free parameters come from being able to regularly perturb the solutions $v_0$ and $u_0$ of (\ref{eqlimit}) in their respective supports, which is possible by the separate nondegeneracy assumptions in Theorem \ref{thm}  for $-w^-$ and $w^+$.
Actually, by their construction, $(0,v_{\tilde{\delta}})$ and $(u_\delta,0)$ will satisfy (\ref{eq2comp}) exactly in their respective domains of definition.
Since the singular limit $(-w^-,w^+)$ has a discontinuous gradient across the interface $\{|x|=r_0\}$, our outer approximate solution loses its effectiveness there.
To fix this issue, we will insert around the interface an inner approximate solution of the form \[\left(\mu g^{-\frac{1}{4}} U\left(\mu g^\frac{1}{4}(r-r_0-\xi) \right),\mu g^{-\frac{1}{4}} V\left(\mu g^\frac{1}{4}(r-r_0-\xi) \right)\right)+\textrm{higher\ order\ terms},\]
where $(U,V)$ solves (\ref{eqBUlimit}), with the scaling $\mu>0$ and the translation $\xi$ as free parameters. The higher order terms are determined by solving linear inhomogeneous problems involving the linearization of (\ref{eqBUlimit}) about $(U,V)$. An invertibility theory for such problems was developed in \cite{aftalionSourdis} based on the fact that the only bounded elements of the kernel of this operator are constant multiples of $(U',V')$ (see \cite{berestycki1}). The latter property means that the solution $(U,V)$ is linearly nondegenerate with respect to the ODE system (\ref{eqBUlimit}). In passing, we note that it is also nondegenerate with respect to the PDE version of (\ref{eqBUlimit}) (see \cite{SJDE}).
We want the outer and inner approximate solutions to match sufficiently well in an intermediate zone, so that we can glue them together via cutoff functions. In fact, this is where the main effort is needed.
To this end, we need to adjust conveniently the four free parameters that are involved in their construction. This task boils down to solving $4\times 4$ linear algebraic systems, which are solvable thanks to the first nondegeneracy assumption in Theorem \ref{thm} concerning $w$.

Having constructed a sufficiently good approximate solution to (\ref{eq2comp}) for large $g$, our next task is to linearize the problem about this approximation.
We find that the linearized operator is invertible and we obtain estimates for its inverse between appropriate weighted spaces. We stress that the previously mentioned linear nondegeneracy of the solution $(U,V)$ with respect to the ODE system (\ref{eqBUlimit}), as well as the nondegeneracy assumption on $w$ in Theorem \ref{thm}, both play a very important role in the analysis. On the other hand, the separate nondegeneracy assumptions for $-w^-$ and $w^+$ do not enter in the linear analysis.

Lastly, armed with the above, we can set up a contraction mapping argument and capture a true solution of (\ref{eq2comp}) near the approximate one.

\subsection{Outline of the paper} In the rest of the paper we will prove Theorem \ref{thm} in the case where $\Omega$ is the unit ball $B_1$ centered at the origin.
In Section \ref{secApprox} we will construct our approximate solution. In Section \ref{seclinear} we will study the linearized problem about it. In Section \ref{secnonlinear} we will apply a fixed point argument to the nonlinear problem and prove our main result. Finally, in Section \ref{secapplications} we will discuss some concrete situations where Theorem \ref{thm} is applicable.

\subsection{Notation} In the sequel we will use the following notation:
\[
v_0=-w^-\ \textrm{for}\ |x|<r_0,\ u_0=w^+\ \textrm{for}\ r_0<|x|<1.
\]
Since we will only be concerned with radial functions, we will denote $r=|x|$ and use $'$ to also symbolize differentiation with respect to $r$. Therefore, we have the following relation which will be used frequently in the sequel:
\[
-v_0'(r_0)=u_0'(r_0)=:\psi_0>0.
\]

By $C/c$ we will denote a large/small generic positive constant, independent of sufficiently large $g>0$,
whose value may increase/decrease as the paper progresses.

\section{Construction of the approximate solution $(u_{ap},v_{ap})$}\label{secApprox}
In this section, we are going to construct our approximate solution  $(u_{ap},v_{ap})$ to (\ref{eq2comp}) for large $g>0$.

\subsection{The outer approximate solution $(u_{out},v_{out})$} In $(0,r_0)$ and $(r_0,1)$, our outer approximate solution $(u_{out},v_{out})$ will be of the form $(0,v_{\tilde{\delta}})$ and $(u_\delta ,0)$, respectively, where these two functions are solutions to
\begin{equation}
\label{outapprox}
\begin{cases}\Delta v_{\tilde{\delta}}=h(v_{\tilde \delta}),\ r\in  (0,r_0),\\ \ v_{\tilde{\delta }}(r_0)=\tilde \delta , \end{cases}  \begin{cases} \Delta u_{\delta}=f (u_{ \delta}),\ r\in (r_0,1),\\ u_{ \delta} (r_0)=\delta,\ u_{\delta }(1)=0 ,  \end{cases}
\end{equation}
for some parameters $0\leq \delta ,\tilde{\delta} \ll 1$ to be determined later on. The existence of such solutions $v_{\tilde{\delta}}$, $u_\delta$ and their smooth dependence on   $\tilde{\delta}$, $\delta\geq 0$ follow from the implicit function theorem, due to the assumption that $u_0$ and $v_0$ are non-degenerate solutions of the respective limit problem in  \eqref{outapprox} for $\tilde{\delta}=\delta=0$. Notice that we can expand these two functions with respect to $\delta$ and $\tilde{\delta}$ namely
\begin{equation}\label{outerexp1}\begin{cases}u_\delta =u_0+ \delta u_1 +\delta^2 u_2 +\delta^3 u_3 +O(\delta^4),\\ v_{\tilde{\delta}}= v_0 +\tilde{\delta} v_1 + \tilde{\delta}^2 v_2 +\tilde{\delta}^3 v_3 +O(\tilde{\delta}^4),\end{cases}\end{equation}
where the $u_i,v_i$, $i\in \{1,2,3\}$ are solutions to
$$\begin{cases}\Delta v_1=h^{\prime}(v_0)v_1,\ r\in  (0,r_0),\\ \ v_{1}(r_0)=1 , \end{cases}  \begin{cases} \Delta u_{1}=f ^{\prime}(u_{ 0})u_1,\ r\in (r_0,1),\\ u_{1} (r_0)=1,\ u_{1 }(1)=0 , \end{cases} $$
$$\begin{cases}\Delta v_2=h^{\prime}(v_0)v_2+\frac{1}{2}h^{\prime \prime} (v_0) v_1^2,\ r\in (0,r_0),\\ \ v_{2}(r_0)=0 , \end{cases}  \begin{cases} \Delta u_{2}=f ^{\prime}(u_{ 0})u_2 +\frac{1}{2}f^{\prime \prime} (u_0) u_1^2 ,\ r\in (r_0,1),\\ u_{2} (r_0)=0,\ u_{2 }(1)=0 , \end{cases} $$
and
$$\begin{cases}\Delta v_3=h^{\prime}(v_0)v_3+h^{\prime \prime} (v_0) v_1v_2+\frac{1}{6}h^{(3)} (v_0) v_1^3,\ r\in  (0,r_0),\\ \ v_{3}(r_0)=0 , \end{cases} $$ $$ \begin{cases} \Delta u_{3}=f ^{\prime}(u_{ 0})u_3+f^{\prime \prime} (u_0) u_1u_2 +\frac{1}{6}f^{(3)} (u_0) u_1^3 ,\ r\in (r_0,1),\\ u_{3} (r_0)=0,\ u_{3 }(1)=0. \end{cases} $$Once more, these problems are solvable thanks to our nondegeneracy assumption on $u_0$ and $v_0$.
Expanding now in $r$, and setting \[s= r-r_0,\] we find
\begin{equation}\label{outerexp}\begin{cases}u_\delta (r)=\delta+ s (\psi_0 +\delta u_1^\prime (r_0) +\delta^2 u_2^\prime (r_0) )+ \dfrac{s^2}{2}(u_0^{\prime \prime} (r_0) +\delta  u_1^{\prime \prime} (r_0) )+\frac{s^3}{6}u_0^{\prime \prime \prime}(r_0)+ \sum_{j=0}^{3} {O(\delta^j s^{4-j})}, \\ v_{\tilde{\delta}}(r)= \tilde{\delta}+ s (-\psi_0   + \tilde{\delta} v_1^\prime (r_0) +\tilde{\delta}^2 v_2^\prime (r_0) )+\dfrac{s^2}{2} (v_0^{\prime \prime} (r_0) +\tilde\delta  v_1^{\prime \prime} (r_0) ) +\frac{s^3}{6}v_0^{\prime \prime \prime}(r_0)+ \sum_{j=0}^{3} {O(\tilde{\delta}^j s^{4-j})},\end{cases}\end{equation} for $s>0$ and $s<0$, respectively.


\subsection{The inner approximate solution $(u_{in},v_{in})$}
 For $r-r_0$ small enough, more precisely for $|r-r_0|\leq |\ln g|g^{-1/4}$, our ansatz for an inner approximate solution will be of the form
\begin{equation}
\label{inapprox}
u_{in}(r)=\mu g^{-1/4}U(t) +\varphi (t),\ v_{in}(r)=\mu g^{-1/4}V(t) +\tilde\varphi (t), \text{ where } t=\mu g^{1/4}(r-r_0-\xi ),
\end{equation}
where $(U,V)$ is a solution to \eqref{eqBUlimit} as in Proposition \ref{propoUV} below, the parameters $\mu >0$, $\xi \in \R$, and the corrections $\varphi ,\tilde \varphi$ are free  to be determined in the following. We will work under the assumption that
\[
\xi=O(g^{-\frac{1}{4}}),\ \mu=1+O(g^{-\frac{1}{4}}),\ \delta=O(g^{-\frac{1}{4}}),\ \tilde{\delta}=O(g^{-\frac{1}{4}}),
\]
which will be verified a-posteriori. Abusing notation slightly, we will also denote by $'$ differentiation with respect to $t$.

The following two propositions will play an important role in the construction of our inner approximate solution.
\begin{prop}\label{propoUV}
There exists a unique solution $(U,V)$ of
(\ref{eqBUlimit}) with positive components that satisfy the mirror symmetry condition
(\ref{eqRefUV}) and $U'(+\infty)=\psi_0$. In fact, the asymptotic behaviour
\begin{equation}\label{missing2}U(t)=\psi_0 t +k +O(e^{-c t^2})\ \textrm{and}\ V(t)=O(e^{-c t^2}), \end{equation}
holds for some $k>0$.
\end{prop}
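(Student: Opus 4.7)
The statement is essentially a specialization of the classification of positive entire solutions of \eqref{eqBUlimit} carried out in \cite{berestycki1,berestycki2}, combined with the asymptotic refinement of \cite{aftalionSourdis}. Accordingly, my plan is to start from a reference solution and use the two available group parameters $(\mu,\tau)$ to fix the correct normalization at $+\infty$ and the mirror symmetry, and then extract the asymptotic expansion from the super-exponential decay of $V$.

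\textbf{Step 1 (existence via a reference solution).} From \cite{berestycki1,berestycki2}, there exists a fixed positive solution pair $(\tilde{U},\tilde{V})$ of \eqref{eqBUlimit} satisfying the mirror reflection $\tilde{U}(-t)\equiv \tilde{V}(t)$, together with the asymptotics $\tilde U(t)\to 0$ as $t\to -\infty$ and $\tilde U'(t)\to \tilde\psi>0$ as $t\to +\infty$, for some $\tilde\psi$. The same references show that every positive entire solution is obtained from $(\tilde{U},\tilde{V})$ by the two-parameter action $(U,V)(t)=\mu(\tilde U,\tilde V)(\mu t+\tau)$, with $\mu>0$ and $\tau\in\mathbb{R}$.

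\textbf{Step 2 (fixing the parameters).} Differentiating, one has $U'(+\infty)=\mu^{2}\tilde U'(+\infty)=\mu^{2}\tilde\psi$, so the prescription $U'(+\infty)=\psi_{0}$ forces $\mu=\sqrt{\psi_{0}/\tilde\psi}$, uniquely. For the mirror symmetry, compute
\[
U(-t)=\mu\tilde U(-\mu t+\tau)=\mu\tilde V(\mu t-\tau), \qquad V(t)=\mu\tilde V(\mu t+\tau),
\]
using $\tilde U(-s)=\tilde V(s)$. Since $\tilde V$ is not invariant under translations (its derivative has a sign, as $\tilde V$ decays at $+\infty$ and grows at $-\infty$), imposing $U(-t)\equiv V(t)$ yields $\tau=0$, uniquely. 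This provides both existence and uniqueness of $(U,V)$ with the desired properties.

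\textbf{Step 3 (asymptotic expansion).} Now that $(U,V)$ is pinned down, the expansion follows from a quick ODE argument. Because $U(t)\sim \psi_{0}t$ at $+\infty$, the equation $V''=U^{2}V$ has a potential growing quadratically, and a standard sub/supersolution or WKB comparison yields $V(t)=O(\exp(-ct^{2}))$ for some $c>0$ and all large $t$; in fact this is precisely the estimate already stated in \eqref{missing}. Plugging this back into $U''=UV^{2}$ gives $|U''(t)|=O(t\,e^{-2ct^{2}})$, which is integrable at $+\infty$, so $U'(t)$ converges to $\psi_{0}$ super-exponentially, and
\[
U(t)=\psi_{0}t+k+\int_{t}^{\infty}(s-t)U(s)V(s)^{2}\,ds=\psi_{0}t+k+O\bigl(e^{-c't^{2}}\bigr),
\]
for some constant $k\in\mathbb{R}$ defined by $k=\lim_{t\to+\infty}(U(t)-\psi_{0}t)$.

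\textbf{Step 4 (positivity of $k$).} The only non-routine point is the strict positivity $k>0$, which is the content of the observation in \cite{aftalionSourdis} and which I would reproduce here if not already recorded: one uses the first integral/Hamiltonian identity for \eqref{eqBUlimit} together with the sign information $U'>0$ throughout $\mathbb{R}$ (which itself follows from the variational characterization of $(\tilde U,\tilde V)$ together with positivity), to show that $U(0)>0$ and that $U(t)\ge \psi_{0}t$ for all $t\ge 0$, with strict inequality at $t=0$; letting $t\to+\infty$ then gives $k\ge U(0)-0>0$. This is the only step requiring care — everything else is a direct consequence of the classification in \cite{berestycki1,berestycki2} and the normalization performed in Step 2.
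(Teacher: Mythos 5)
The paper does not supply an argument for Proposition~\ref{propoUV}; it simply cites \cite{berestycki1} for the classification and \cite{aftalionSourdis} for $k>0$. Your Steps~1--3 are a correct (if somewhat informal) reconstruction of how the normalization $\mu=\sqrt{\psi_0/\tilde\psi}$, $\tau=0$ is pinned down from the two-parameter classification and how the expansion \eqref{missing2} is extracted from the Gaussian decay of $V$ via the integral identity $U(t)=\psi_0 t+k+\int_t^\infty(s-t)U(s)V(s)^2\,ds$.

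Step~4, however, contains a genuine logical error. From $U''=UV^2\ge 0$ the function $U$ is \emph{convex}, and since $U'$ increases to $\psi_0$ we have $U'\le\psi_0$ on all of $\mathbb{R}$; hence $t\mapsto U(t)-\psi_0 t$ is \emph{non-increasing}, and its limit $k$ satisfies $k\le U(0)$, not $k\ge U(0)$ as you assert. In fact the very identity you wrote down gives $U(0)=k+\int_0^\infty sU(s)V(s)^2\,ds>k$, confirming the opposite inequality. Moreover, the intermediate claim ``$U(t)\ge\psi_0 t$ for all $t\ge 0$'' is equivalent to $k\ge 0$ (since $U(t)-\psi_0 t$ decreases to $k$), so as stated it is essentially circular. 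The correct object to use is not $U$ but the difference $W=U-V$. By the mirror symmetry $W$ is odd, $W(0)=0$, $W'=U'-V'>0$ everywhere, and $W''=UV^2-VU^2=-UVW<0$ for $t>0$, so $W$ is strictly \emph{concave} on $(0,\infty)$. Thus $W'$ decreases strictly to $W'(+\infty)=\psi_0$, so $W'>\psi_0$ on $(0,\infty)$, and hence $W(t)-\psi_0 t=\int_0^t\bigl(W'(s)-\psi_0\bigr)ds$ is strictly increasing from $0$; its limit equals $k$ (since $W(t)=U(t)-V(t)\to\psi_0 t+k$), giving $k=\int_0^\infty\bigl(W'(s)-\psi_0\bigr)ds>0$. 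This is the argument you should record in place of the one given; the Hamiltonian identity you invoke is not actually needed once one passes to $W$.
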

 \begin{prop}\label{proSymA}
	Given $(H,\tilde{H})\in  \left[C(\mathbb{R})\right]^2$ satisfying the exponential decay estimate
	\[
|H(t)|+|\tilde{H}(t)|\leq Ce^{-c|t|},\ \ t\in \mathbb{R},
\]
 there exists a  solution $(\Phi,\tilde{\Phi})\in \left[C^2(\mathbb{R})\right]^2$ to \begin{equation}\label{eqL}L\left(\begin{array}{c}
                                                                                                                         \Phi \\
                                                                                                                           \\
                                                                                                                         \tilde{\Phi}
                                                                                                                       \end{array}\right):= \left(\begin{array}{c}
			-\Phi''+V^2\Phi+2UV\tilde{\Phi}  \\
			\\
			-\tilde{\Phi}''+U^2\tilde{\Phi}+2UV\Phi  \\
		\end{array}\right)=\left(\begin{array}{c}
                                                                                                                                                  H \\
                                                                                                                                                    \\
                                                                                                                                                  \tilde{H}
                                                                                                                                                \end{array}
  \right),
		\ \ t\in \mathbb{R},
	\end{equation}
 such that
	\[\begin{array}{lll}
	\Phi (t)=a_++bt+\mathcal{O}(e^{-c't}), & \tilde{\Phi}(t)=\mathcal{O}(e^{-c't}) & \textrm{as}\ \ t\to +\infty;
	\\
	& & \\
	\Phi(t)=\mathcal{O}(e^{c't}),& \tilde{\Phi}(t)=a_-+bt+\mathcal{O}(e^{c't})& \textrm{as}\ \ t\to -\infty,
	\end{array}
	\]
	for  any $c' \in (0,c)$, where
	\[
	b=-\frac{1}{2\psi_0}\int_{-\infty}^{\infty}\left(U'H+V'\tilde{H} \right)dt,
	\]
	and $a_+,a_-$ satisfy \[
a_++a_-=-\frac{1}{2\psi_0}\int_{-\infty}^{\infty}\left((tU'+U)H+(tV'+V)\tilde{H}  \right)dt.
\]
\end{prop}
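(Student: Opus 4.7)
The plan is to exploit the continuous symmetries of (\ref{eqBUlimit}) to produce the needed kernel elements of $L$, and then construct the particular solution by a Green's function adapted to the asymptotic geometry at $\pm\infty$. Differentiating the translation $(U,V)\mapsto(U(\cdot+\tau),V(\cdot+\tau))$ at $\tau=0$ and the scaling $(U,V)\mapsto(\mu U(\mu\,\cdot),\mu V(\mu\,\cdot))$ at $\mu=1$ yields two explicit elements of the kernel of $L$,
\[
\mathbf{k}_1:=(U',V'),\qquad \mathbf{k}_2:=(U+tU',V+tV').
\]
From (\ref{missing2}) and (\ref{eqRefUV}) one reads off $\mathbf{k}_1\to(\psi_0,0)$ and $\mathbf{k}_2\sim(2\psi_0 t+k,\,0)$ as $t\to+\infty$, with mirrored behaviour at $-\infty$ and super-exponentially small off-diagonal components. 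Recalling from \cite{berestycki1} that $\mathbf{k}_1$ spans the bounded solutions of $L\mathbf{K}=0$, and noting that a freezing-coefficient analysis at $\pm\infty$ (where the equation for $\tilde{\Phi}$ degenerates to $\tilde{\Phi}''\sim\psi_0^2 t^2\tilde{\Phi}$ at $+\infty$, and dually for $\Phi$ at $-\infty$) gives two further independent solutions of Gaussian type, one growing like $e^{\psi_0 t^2/2}$ at $+\infty$ and another growing like the same at $-\infty$, the four-dimensional solution space of $L\mathbf{K}=0$ is therefore spanned by $\mathbf{k}_1$, $\mathbf{k}_2$, and these two Gaussian modes.

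I would then construct the particular solution by variation of parameters relative to this fundamental system, choosing the base points of integration so that the two Gaussian-growing modes have vanishing coefficient in the result. The hypothesis $|H|+|\tilde{H}|\le Ce^{-c|t|}$ combined with the Gaussian decay of the opposite modes at the respective infinities ensures absolute convergence of every integral and produces a classical $C^2$ solution on $\mathbb{R}$ whose growth at each infinity is at most linear, with a super-exponentially small remainder. The remaining freedom, namely addition of multiples of $\mathbf{k}_1$ and $\mathbf{k}_2$, is then used to normalize the solution into the shape claimed in the proposition: adding $\beta\mathbf{k}_2$ shifts the linear-growth slope at $+\infty$ by $2\psi_0\beta$ and at $-\infty$ by $-2\psi_0\beta$, so the slope difference can always be killed to yield a common slope $b$, while a multiple of $\mathbf{k}_1$ redistributes constants between $a_+$ and $a_-$ leaving their sum invariant.

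The explicit formulas for $b$ and $a_++a_-$ are then extracted by testing (\ref{eqL}) against $\mathbf{k}_1$ and $\mathbf{k}_2$. The formal self-adjointness of $L$ together with $L\mathbf{k}_j=0$ give, after one integration by parts,
\[
\int_{-\infty}^{+\infty}\mathbf{H}\cdot\mathbf{k}_j\,dt=\bigl[\Phi K_j'-\Phi'K_j+\tilde{\Phi}\tilde{K}_j'-\tilde{\Phi}'\tilde{K}_j\bigr]_{-\infty}^{+\infty},\qquad \mathbf{k}_j=(K_j,\tilde{K}_j),
\]
so substituting the asymptotic forms of $\mathbf{k}_j$ and $(\Phi,\tilde{\Phi})$ reduces the right-hand side to a simple linear combination of $b$, $a_+$, and $a_-$: the case $j=1$ yields the stated formula for $b$, and the case $j=2$ that for $a_++a_-$. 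The main technical obstacle will be the bookkeeping in the $j=2$ boundary terms, since both $\mathbf{k}_2$ and $(\Phi,\tilde{\Phi})$ grow linearly in $t$, so several $O(t)$ contributions appear at $\pm\infty$ and have to cancel exactly; this forces one to use the sharp remainder $U(t)=\psi_0 t+k+O(e^{-ct^2})$ together with its differentiated form and the mirror symmetry (\ref{eqRefUV}), rather than cruder bounds.
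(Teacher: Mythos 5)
First, a framing remark: the paper does not prove this proposition, it quotes it from \cite{aftalionSourdis}; so your argument has to stand on its own. Your overall strategy -- the symmetry-generated kernel elements $\mathbf{k}_1=(U',V')$ and $\mathbf{k}_2=(tU'+U,tV'+V)$, a fundamental system separating tame from Gaussian-growing directions at each infinity, variation of parameters with base points chosen at the appropriate infinities, and the Lagrange identity to extract $b$ and $a_++a_-$ -- is the right one. But the assertion that the four-dimensional solution space of $L\mathbf{K}=0$ is spanned by $\mathbf{k}_1$, $\mathbf{k}_2$ and two Gaussian modes is not justified, and it is genuinely needed. Let $S_{\pm}$ be the three-dimensional spaces of solutions with at most polynomial growth at $\pm\infty$ (three-dimensional by the freezing/Levinson analysis you sketch). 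Your basis exists precisely when $S_+\cap S_-=\mathrm{span}(\mathbf{k}_1,\mathbf{k}_2)$; if instead $S_+=S_-$, the coefficient of the doubly Gaussian-growing mode in the variation-of-parameters formula would have to vanish at \emph{both} infinities, which imposes a solvability condition on $(H,\tilde H)$, and the unconditional existence claim would fail. The missing step is exactly your own Lagrange-identity device applied to kernel elements: for $\mathbf{w}\in S_+\cap S_-$ with linear slopes $\beta_{\pm}$ at $\pm\infty$, constancy of the bilinear concomitant of $\mathbf{w}$ with $\mathbf{k}_1$ forces $\beta_+=-\beta_-$, so $\mathbf{w}-\frac{\beta_+}{2\psi_0}\mathbf{k}_2$ is bounded and hence a multiple of $(U',V')$ by \cite{berestycki1}. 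Write this out; it is one paragraph and it is where the nondegeneracy input actually enters.

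Second, you assert without computing that the $j=2$ pairing yields the stated formula for $a_++a_-$; when you carry it out it does not. With $W=\Phi K_2'-\Phi'K_2+\tilde\Phi\tilde K_2'-\tilde\Phi'\tilde K_2$ one has $W'=HK_2+\tilde H\tilde K_2$; inserting $K_2\sim 2\psi_0t+k$, $K_2'\to 2\psi_0$ at $+\infty$ and (by the mirror symmetry) $\tilde K_2\sim -2\psi_0t+k$, $\tilde K_2'\to-2\psi_0$ at $-\infty$, the $O(t)$ contributions cancel exactly as you predict and one finds $W(+\infty)=2\psi_0a_+-bk$, $W(-\infty)=-2\psi_0a_--bk$, hence $\int_{-\infty}^{\infty}\left((tU'+U)H+(tV'+V)\tilde H\right)dt=+2\psi_0(a_++a_-)$, i.e.\ the opposite sign to the one printed (the $j=1$ pairing, by contrast, does reproduce the stated $b$). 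You must do this bookkeeping explicitly and then reconcile the sign with the source's conventions or record it as a typo; as used later in the paper only the orders of magnitude of $a_{\pm}$ matter, so nothing downstream breaks, but your proof cannot simply claim to arrive at the displayed formula. A minor further correction: with data decaying only like $e^{-c|t|}$, the remainders produced by variation of parameters are $O(e^{-c't})$ for $c'<c$, as the proposition claims, not super-exponentially small.
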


Proposition \ref{propoUV} was proven in \cite{berestycki1}, while the fact that $k>0$ was observed in \cite{aftalionSourdis}. Proposition \ref{proSymA} was  proven in \cite{aftalionSourdis}.

 First let us plug \eqref{inapprox} into \eqref{eq2comp}. We get
\begin{align*}
&-\Delta u_{in}+f(u_{in})+g u_{in} v_{in}^2 \\
&=-g^{\frac{1}{2}}\mu^2 \varphi^{\prime \prime}+ 2g^{\frac{1}{2}}\mu^2 UV \tilde{\varphi} +g^{\frac{3}{4}}\mu U \tilde{\varphi}^2+g\varphi (g^{-\frac{1}{4}}\mu V+\tilde{\varphi} )^2 \\
&-\dfrac{n-1}{\frac{t}{\mu g^{\frac{1}{4}}}+r_0+\xi}\mu (\mu U^\prime + g^{\frac{1}{4}} \varphi^\prime)+ f(g^{-\frac{1}{4}}\mu U +\varphi ),
\end{align*}
and an analogous relation for $v_{in}$.

Observe that
$$\dfrac{n-1}{\frac{t}{\mu g^{\frac{1}{4}}}+r_0+\xi} =\dfrac{n-1}{r_0}  ( 1- \dfrac{\xi}{r_0}- \dfrac{t}{\mu g^{1/4}r_0 }) +O\left( (\dfrac{\xi}{r_0}+ \dfrac{t}{\mu g^{1/4}r_0 } )^2 \right) ,$$
and
$$f(g^{-1/4}\mu U+\varphi)=f^\prime (0)(g^{-1/4} \mu U +\varphi)+O\left( (g^{-1/4}\mu U+\varphi )^2  \right). $$
 To take care of the terms $-\dfrac{n-1}{r_0}\mu^2 U^\prime$ and $-\dfrac{n-1}{r_0}\mu^2 V^\prime$ in the remainder that is left by $(u_{in},v_{in})$, we define $(\varphi_0 ,\tilde{\varphi}_0 )$ to be a solution to
$$L\begin{pmatrix}\varphi_0  \\ \tilde{\varphi}_0\end{pmatrix}=\begin{pmatrix}\dfrac{n-1}{r_0} U^\prime g^{-\frac{1}{2}} \\ \dfrac{n-1}{r_0} V^\prime g^{-\frac{1}{2}} \end{pmatrix},$$ where $L$ is the linear operator that is defined by the lefthand side of (\ref{eqL}).
  Letting $Z,\tilde{Z}$ be smooth functions satisfying
$$ Z(t)=0,\ t\leq -1,\ Z(t)=-\dfrac{n-1}{2 r_0}\psi_0 g^{-1/2}  t^2 ,\ t\geq 1,   \ \textrm{and}\ \tilde{Z}(t)\equiv -Z(-t),$$
we see that the pair $(\phi_0 , \tilde{\phi}_0 )$ defined as
$$  g^{-1/2}(\phi_0 , \tilde{\phi}_0 )= (\varphi_0 , \tilde{\varphi_0}) -(Z, \tilde{Z})  , $$
satisfies
$$L\begin{pmatrix}\phi_0  \\ \tilde{\phi}_0 \end{pmatrix}=\begin{pmatrix}F_0\\ \tilde{F}_0 \end{pmatrix},$$
for some fixed functions $F_0,\ \tilde{F}_0$ such that
$$|F_0 (t)|+|\tilde{F}_0 (t)|\leq C e^{-ct^2},\  \tilde{F}_0(-t)=-F_0(t), \ t\in \R.$$
By Proposition \ref{proSymA}, we know that there exists a two parameter family of solutions with  $\tilde{\phi}_0(-t)\equiv-\phi_0(t)$ to the previous problem, parametrized by $A_0,B_0 \in \R$, such that
$$\begin{cases}\phi_0 (t)= b_0 t +A_0 \psi_0 +2B_0 \psi_0 t +B_0 k +O\left( (|A_0|+|B_0 |+1)e^{-c^\prime |t| } \right),\\ \tilde{\phi}_0 (t)=O\left( (|A_0 |+|B_0 |+1)e^{-c^\prime |t| } \right), \end{cases}$$
when $t\rightarrow +\infty$, and
$$\begin{cases}\tilde{\phi}_0 (t)=b_0 t -A_0\psi_0 -2B_0 \psi_0 t +B_0 k +O\left( (|A_0 |+|B_0 |+1)e^{-c^\prime |t| } \right),\\ \phi_0 (t)=O\left( (|A_0 |+|B_0 |+1)e^{-c^\prime |t| } \right), \end{cases}$$
when $t\rightarrow -\infty$, for some $b_0\in \R$ and any $c^\prime >0$. Combining all the previous computations and recalling that $s= r-r_0  $, we get that, for $t$ large enough,
\begin{equation}
\label{innerexp1}
\begin{cases}
\varphi_0 (t) &= -\dfrac{n-1}{2 r_0}\psi_0 g^{-1/2}  t^2  + g^{-1/2}(b_0 t + A_0 \psi_0 +B_0 k +2B_0 \psi_0 t )+O\left(g^{-\frac{1}{2}} (|A_0 |+|B_0 |+1)e^{-c^\prime |t| } \right)\\
&=  -\dfrac{n-1}{2 r_0}\psi_0 \mu^2  \xi^2  - g^{-1/4}\mu \xi(b_0+2B_0 \psi_0  )+ g^{-1/2} ( A_0 \psi_0 +B_0 k)\\
&+ s( \dfrac{n-1}{ r_0}\psi_0 \mu^2 \xi + \mu g^{-1/4} (b_0 +2B_0 \psi_0 ) )\\
&+s^2 (-  \dfrac{n-1}{2 r_0}\psi_0 \mu^2   )+O\left(g^{-\frac{1}{2}} (|A_0 |+|B_0 |+1)e^{-c^\prime |t| } \right),\\
\tilde{\varphi}_0 (t)&=O\left(g^{-\frac{1}{2}} (|A_0 |+|B_0 |+1)e^{-c^\prime |t| } \right),
\end{cases}
\end{equation}
and, for $-t$ is large enough,
\begin{equation}\label{innerexp2}
\begin{cases}
\tilde{\varphi}_0 (t) &=  \dfrac{n-1}{2 r_0}\psi_0 \mu^2  \xi^2  - g^{-1/4}\mu \xi(b_0-2B_0 \psi_0  )+ g^{-1/2} ( -A_0 \psi_0 +B_0 k)\\
&+ s(- \dfrac{n-1}{ r_0}\psi_0 \mu^2 \xi + \mu g^{-1/4} (b_0 -2B_0 \psi_0 ) )\\
&+s^2 (  \dfrac{n-1}{2 r_0}\psi_0 \mu^2   )+O\left(g^{-\frac{1}{2}} (|A_0 |+|B_0 |+1)e^{-c^\prime |t| } \right),\\
\varphi_0 (t)&=O\left(g^{-\frac{1}{2}} (|A_0 |+|B_0 |+1)e^{-c^\prime |t| } \right).
\end{cases}
\end{equation}
As a second approximation of our inner solution, we take
$$\begin{cases}u_{in,0}(r)= \mu g^{-1/4}U(t)+\varphi_0 (t),\\ v_{in,0}(r)= \mu g^{-1/4}V(t)+\tilde{\varphi}_0 (t) .\end{cases}$$
Let us also observe that
\begin{equation}\label{innerexp3}\begin{cases}\mu g^{-1/4} U(t)=\mu g^{-1/4} k - \psi_0 \mu^2 \xi + s (\mu^2 \psi_0)+O(g^{-\frac{1}{4}})e^{-g^\frac{1}{2}s^2},\ \text{when $t$ is large enough}, \\
\mu g^{-1/4} V(t)=\mu g^{-1/4} k + \psi_0 \mu^2 \xi + s (- \mu^2 \psi_0)+O(g^{-\frac{1}{4}})e^{-g^\frac{1}{2}s^2}, \text{ when $-t$ is large enough}.\end{cases} \end{equation}

Next, we define our third inner approximate solution by
$$\begin{cases}u_{in,0}(t)= \mu g^{-1/4}U(t)+\varphi_0 (t)+\varphi_1 (t),\\ v_{in,0}(x)= \mu g^{-1/4}V(t)+\tilde{\varphi}_0 (t)+\tilde{\varphi}_1 (t) ,\end{cases}$$
where $(\varphi_1,\tilde{\varphi}_1)$ is a solution to
{\small $$L\begin{pmatrix}\varphi_1  \\ \tilde{\varphi}_1\end{pmatrix}=\begin{pmatrix}- \dfrac{n-1}{r_0^2}\xi  U^\prime g^{-\frac{1}{2}}- \frac{n-1}{r_0^2} \mu^{-1} \psi_0 g^{-3/4} t - (\frac{n-1}{r_0})^2 \mu^{-1}\psi_0 g^{-3/4}t+\frac{n-1}{r_0}\mu^{-1} g^{-3/4} (b_0+ 2B_0 \psi_0) -\mu^{-1}g^{-3/4} f^\prime (0) U, \\ - \dfrac{n-1}{r_0^2}\xi  V^\prime g^{-\frac{1}{2}}+ \frac{n-1}{r_0^2} \mu^{-1} \psi_0 g^{-3/4} t+ (\frac{n-1}{r_0})^2 \mu^{-1}\psi_0 g^{-3/4}t+\frac{n-1}{r_0}\mu^{-1} g^{-3/4} (b_0- 2B_0 \psi_0) -\mu^{-1}g^{-3/4} h^\prime (0) V. \end{pmatrix}.$$}
Proceeding as we did for $\varphi_0$, we obtain, for $t$ large enough,
\begin{align}
\label{innerexp4}
\varphi_1 &= \dfrac{n-1}{r_0^2}\xi  \psi_0 g^{-\frac{1}{2}}t^2 /2  + \frac{n-1}{r_0^2} \mu^{-1} \psi_0 g^{-3/4} t^3 /6 + (\frac{n-1}{r_0})^2 \mu^{-1}\psi_0 g^{-3/4}t^3 /6\nonumber\\
&-\frac{n-1}{r_0}\mu^{-1} g^{-3/4} (b_0+ 2B_0 \psi_0)t^2 /2 +\mu^{-1}g^{-3/4} f^\prime (0) (k t^2 /2 +\psi_0 t^3 /6)\nonumber \\
&+ g^{-3/4} (a_1+b_1 t +A_1\psi_0 +2 B_1 \psi_0 t +B_1 k)+O\left( (1+\sum_{i=0}^1 |A_i|+|B_i|) e^{-c^\prime t }\right)\nonumber \\
&= \dfrac{n-1}{2 r_0^2}\xi^3  \psi_0 \mu^2   -( \frac{n-1}{ r_0^2}+(\frac{n-1}{r_0})^2 ) \mu^{2} \psi_0 \xi^3 /6\nonumber \\
&-\frac{n-1}{2r_0}\mu \xi^2  g^{-1/4} (b_0+ 2B_0 \psi_0)+\mu g^{-1/4} f^\prime (0) (k \xi^2 /2 -g^{1/4}\mu \psi_0 \xi^3 /6)\nonumber \\
&-g^{-1/2}\mu ( b_1 \xi   + 2 B_1 \psi_0 \xi ) + g^{-3/4}(a_1+A_1\psi_0 +B_1 k)\nonumber \\
&+ s ( - \dfrac{n-1}{ r_0^2}\xi^2  \psi_0 \mu^2   +( \frac{n-1}{ r_0^2}+(\frac{n-1}{r_0})^2 ) \mu^{2} \psi_0 \xi^2 /2 \nonumber \\
&\ \ \ \ +\frac{n-1}{r_0}\mu \xi  g^{-1/4} (b_0+ 2B_0 \psi_0)+\mu g^{-1/4} f^\prime (0) (-k \xi + g^{1/4}\mu \psi_0 \xi^2 /2)\nonumber \\
&\ \ \ \  +g^{-1/2}\mu ( b_1    + 2 B_1 \psi_0  )  ) \nonumber \\
&+ s^2 (        - (\frac{n-1}{r_0})^2  \mu^{2} \psi_0 \xi /2 \nonumber\\
&\ \ \ \ -\frac{n-1}{r_0}\mu   g^{-1/4} (b_0/2 + B_0 \psi_0)+\mu g^{-1/4} f^\prime (0) (k/2  - g^{1/4}\mu \psi_0 \xi /2) )\nonumber \\
&+ s^3 ( (\frac{n-1}{r_0^2}+(\frac{n-1}{r_0})^2 )  \mu^{2} \psi_0 /6 + \mu^2 f^\prime (0) \psi_0 /6  ) \nonumber\\
&+O\left( (1+\sum_{i=0}^1 |A_i|+|B_i|) e^{-c^\prime t }\right),
\end{align}
for some fixed $a_1,b_1\in \R$, any $c^\prime$ and some parameters $A_1,B_1$. We obtain a similar expansion for $\tilde{\varphi}_1$, replacing $\psi_0$ by $-\psi_0$, when $-t$ is large enough.

It is easy to see that the following proposition holds.
\begin{prop}\label{propInnerRemain}
The remainder that is left by the inner approximate solution in the system satisfies
\[
\left(\begin{array}{l}
  -\Delta u_{in} + f(u_{in}) + gu_{in}v_{in}^2  \\ \\ -\Delta v_{in} + h(v_{in}) + gv_{in}u_{in}^2
\end{array}\right)=\left(\begin{array}{c}
                           O(|\ln g|^4g^{-\frac{1}{2}}) \\
                           \\
                            O(g^{-\frac{1}{2}})e^{-c'g^\frac{1}{4}(r-r_0)}
                         \end{array}
\right),
\]
uniformly for $0\leq r-r_0\leq C|\ln g|g^{-\frac{1}{4}}$, for any $c'>0$, provided that $g$ is sufficiently large. An analogous estimate holds for $-C|\ln g|g^{-\frac{1}{4}}\leq r-r_0\leq 0$.
\end{prop}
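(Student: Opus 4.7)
The plan is to substitute the ansatz $u_{in}=\mu g^{-1/4}U+\varphi_0+\varphi_1$, $v_{in}=\mu g^{-1/4}V+\tilde\varphi_0+\tilde\varphi_1$ into the two equations of (\ref{eq2comp}) and organize the residual by powers of $g^{1/4}$. The three layers of the construction---the leading profile $(U,V)$, the first correction $(\varphi_0,\tilde\varphi_0)$, and the second correction $(\varphi_1,\tilde\varphi_1)$---should successively annihilate the three largest orders, so that what survives is $O(|\ln g|^4 g^{-1/2})$ for the first component, and the same bound multiplied by $e^{-c' g^{1/4}(r-r_0)}$ for the second.

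First I would expand the radial Laplacian. Using the chain rule $\partial_r=\mu g^{1/4}\partial_t$, the operator $-\Delta=-\partial_r^2-\frac{n-1}{r}\partial_r$ applied to $u_{in}$ produces the term $-\mu^3 g^{1/4}U''$, the pair $-\mu^2 g^{1/2}(\varphi_0''+\varphi_1'')$, and contributions from $-\frac{n-1}{r}\partial_r u_{in}$. I would Taylor-expand $\frac{1}{r}=\frac{1}{r_0+\xi+t/(\mu g^{1/4})}$ around $r_0$ to second order, relying on $\xi=O(g^{-1/4})$ and $|t|\le C|\ln g|$. Similarly, $f(u_{in})=f'(0)u_{in}+O(u_{in}^2)$. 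For the nonlinear coupling $gu_{in}v_{in}^2$, the key algebraic identity is that the quadratic pieces $\mu^2 g^{1/2}V^2(\varphi_0+\varphi_1)+2\mu^2 g^{1/2}UV(\tilde\varphi_0+\tilde\varphi_1)$ combine with $-\mu^2 g^{1/2}(\varphi_0''+\varphi_1'')$ to reproduce exactly $\mu^2 g^{1/2}L(\varphi_0+\varphi_1,\tilde\varphi_0+\tilde\varphi_1)_1$, modulo cubic-and-higher products in the corrections.

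Then the cancellations unfold in order. The $g^{1/4}$ terms vanish because $-U''+UV^2=0$ by (\ref{eqBUlimit}). The $O(1)$ term $-\frac{n-1}{r_0}\mu^2 U'$ is matched by $\mu^2 g^{1/2}$ times the defining right-hand side of $L(\varphi_0,\tilde\varphi_0)$. The $O(g^{-1/4})$ contributions---coming from the first-order Taylor expansion of $\frac{1}{r}$ (both the $\xi$ piece and the $t/(\mu g^{1/4})$ piece multiplying $\mu^2 U'\approx \mu^2\psi_0$ for $t$ large), from $f'(0)\mu g^{-1/4}U$, and from feeding the linearly-growing tails of $(\varphi_0,\tilde\varphi_0)$ into the $V^2\varphi_0$ and $2UV\tilde\varphi_0$ cross terms (which produce the $b_0+2B_0\psi_0$ constants)---are matched term by term by the right-hand side chosen for the equation defining $(\varphi_1,\tilde\varphi_1)$. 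This is the whole reason that source was set up the way it was.

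All that remains is the estimation step. The leftover is a sum of: the second-order Taylor remainder of $\frac{1}{r}$ times $\mu^2 U'$; the nonlinear part of $f(u_{in})$; quadratic-and-higher products in $(\varphi_i,\tilde\varphi_i)$ arising from $gu_{in}v_{in}^2$; and the $O(e^{-c'|t|})$ tails displayed in the expansions (\ref{innerexp1})--(\ref{innerexp4}). Using $|t|\le C|\ln g|$ together with the explicit polynomial-in-$t$ growth and $g^{-3/4}$ prefactors visible in (\ref{innerexp4}), each such contribution is bounded by $|\ln g|^k g^{-1/2}$ with $k\le 4$. For the $v$-equation, the same bookkeeping applies, but on $\{t>0\}$ every surviving term carries at least one factor that is super-exponentially small---either $V$, $V'$ (from (\ref{missing2})) or $\tilde\varphi_0$, $\tilde\varphi_1$ (from Proposition \ref{proSymA})---and reverting to $r$ via $t=\mu g^{1/4}(r-r_0-\xi)$ yields the advertised $e^{-c' g^{1/4}(r-r_0)}$ factor. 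The main obstacle is the bookkeeping of the linearly growing tails of $(\varphi_0,\tilde\varphi_0)$: one must confirm that precisely those growth terms are the ones absorbed by the source chosen for $(\varphi_1,\tilde\varphi_1)$, for otherwise the $O(g^{-1/4})$ remainder would fail to close and no amount of estimation at the end could recover the stated bound.
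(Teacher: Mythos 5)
The paper offers no proof of this proposition (it says only ``it is easy to see''), so you are in effect supplying the missing verification, and your outline follows the intended road: the $g^{1/4}$ order is killed by the blow-up system (\ref{eqBUlimit}), the $O(1)$ curvature term $-\frac{n-1}{r_0}\mu^2 U'$ is killed by the source chosen for $L(\varphi_0,\tilde\varphi_0)$, the $O(g^{-1/4})$ collection (curvature Taylor correction, $f'(0)\mu g^{-1/4}U$, linear tails of $\varphi_0$) is killed by the source for $L(\varphi_1,\tilde\varphi_1)$, and on $t>0$ every surviving piece of the $v$-equation carries a super-exponentially small factor from $V$, $V'$, $\tilde\varphi_0$, or $\tilde\varphi_1$. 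All of this is consistent with the construction in Section \ref{secApprox}.

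But there is a concrete point you gloss over in the estimation step, and it is sharper than the ``main obstacle'' you flag. Expanding $g\,u_{in}v_{in}^2$ with $u_{in}=\mu g^{-1/4}U+\varphi$, $v_{in}=\mu g^{-1/4}V+\tilde\varphi$ produces, beyond the linear-in-correction pieces that reconstitute $\mu^2 g^{1/2}L$, the quadratic terms $\mu g^{3/4}U\tilde\varphi^2$ and $2\mu g^{3/4}V\varphi\tilde\varphi$. For $t=O(1)$ (i.e.\ $r\approx r_0$), $U(t)$ and $V(t)$ are $O(1)$, while $\varphi_0(t),\tilde\varphi_0(t)$ are generically $O(g^{-1/2})$: recall $\varphi_0=Z+g^{-1/2}\phi_0$ with $Z(0)=O(g^{-1/2})$ and $\phi_0(0)=O(1)$ once $A_0=O(1)$, $B_0=O(g^{-1/4})$ are fixed. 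Hence these quadratic products are of size $g^{3/4}\cdot g^{-1}=g^{-1/4}$ near $t=0$, which is \emph{not} absorbed by the $|\ln g|^k g^{-1/2}$ bound you assert for ``each such contribution.'' For the stated bound to hold one must either verify that these terms are in fact cancelled (note that after dividing by $\mu^2 g^{1/2}$ they are $O(g^{-3/4})$, the same order as the displayed source for $L(\varphi_1,\tilde\varphi_1)$, but that source as written does not contain $\mu^{-1}g^{1/4}U\tilde\varphi_0^2$ or $\mu^{-1}g^{1/4}V\varphi_0\tilde\varphi_0$), or add a further correction layer to absorb them. Your proposal needs to address this; simply citing the polynomial-in-$t$ growth and the $g^{-3/4}$ prefactors of $\varphi_1$ does not establish the claimed $O(|\ln g|^4g^{-1/2})$ bound near the core $t=O(1)$.
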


\subsection{Matching the outer and inner approximate solutions: Adjusting the free parameters} Here we will determine all the previous free parameters, namely
$\delta_i, \tilde{\delta}_i,A_j,B_j $, $i=1,2,3$, $j=0,1$, in order to match $(u_{out},v_{out})$ with $(u_{in}, v_{in})$ when $|r-r_0|\approxeq |\ln g|g^{-\frac{1}{4}}$.

In light of the asymptotics  \eqref{innerexp1}, \eqref{innerexp2}, \eqref{innerexp3} for $(u_{in},v_{in})$ and those in  \eqref{outerexp} for $(u_{out},v_{out})$, by equating the powers $s^0$ and $s^1$ in these expansions,  setting \[\delta =\delta_1 +\delta_2+\delta_3,\ \tilde \delta = \tilde{\delta}_1 +\tilde{\delta}_2+\tilde{\delta}_3,\ \mu =1+\mu_1,\] for some $\delta_i,\tilde{\delta}_i$, $i\in \{1,2,3\}$ and $\mu_1$ to be determined in the following, we impose that
\begin{equation}
\label{defdelta1}
\left\{\begin{array}{ll}\delta_1 &=g^{-\frac{1}{4}} k-\xi \psi_0\\ \delta_1 u^\prime_1 (r_0)&=2\psi_0 \mu_1 +\frac{n-1}{r_0} \psi_0 \xi + b_0 g^{-1/4}\\ 
 \tilde{\delta}_1 &=g^{-\frac{1}{4}} k+\xi \psi_0  \\
\tilde{\delta}_1 v^\prime_1 (r_0)&=-2\psi_0 \mu_1 -\frac{n-1}{r_0} \psi_0 \xi + b_0 g^{-1/4}.\\ 
\end{array}\right.
\end{equation}

Using the first nondegeneracy assumption in Theorem \ref{thm}, we have that \begin{equation}\label{eqNondegen}
u_1^\prime (r_0)\neq v_1^\prime (r_0).\end{equation} Indeed, if not then the nontrivial function\[z=\left\{\begin{array}{c}
                                                                                                  v_1,\ r\in (0,r_0), \\
                                                                                                  u_1,\ r\in (r_0,1),
                                                                                                \end{array}
\right.\] would be smooth and in the kernel of the linearization of (\ref{eqlimit}) about $w$, which contradicts the aforementioned nondegeneracy assumption.

This last inequality (\ref{eqNondegen}) allows us to solve the previous system. Moreover, we find
$$\mu_1 =O(g^{-\frac{1}{4}}),\ \xi=O(g^{-\frac{1}{4}}),\ \delta_1 =O(g^{-\frac{1}{4}}),\ \tilde{\delta}_1 =O(g^{-\frac{1}{4}}).$$ For future purposes, we note that
\[
\xi=\frac{u_1'(r_0)+v_1'(r_0)}{u_1'(r_0)-v_1'(r_0)}\frac{1}{\psi_0}g^{-\frac{1}{4}}k-\frac{2}{\psi_0}\frac{b_0g^{-\frac{1}{4}}}{u_1'(r_0)-v_1'(r_0)},
\]
\[\begin{array}{ccc}
    \mu_1 & = & -\frac{n-1}{2r_0}\frac{u_1'(r_0)+v_1'(r_0)}{u_1'(r_0)-v_1'(r_0)}\frac{1}{\psi_0}g^{-\frac{1}{4}}k-\frac{u_1'(r_0)v_1'(r_0)}{u_1'(r_0)-v_1'(r_0)}\frac{1}{\psi_0}g^{-\frac{1}{4}}k \\
      &   &  +\frac{n-1}{r_0}\frac{1}{\psi_0}\frac{1}{u_1'(r_0)-v_1'(r_0)}g^{-\frac{1}{4}}b_0+ \frac{u_1'(r_0)+v_1'(r_0)}{2\left(u_1'(r_0)-v_1'(r_0)\right)}\frac{1}{\psi_0}g^{-\frac{1}{4}}b_0,
  \end{array}
\]
\[
\delta_1=- \frac{2g^{-\frac{1}{4}}kv_1'(r_0)}{ u_1'(r_0)-v_1'(r_0) } +\frac{2}{ u_1'(r_0)-v_1'(r_0) }g^{-\frac{1}{4}}b_0,
\]
\[\tilde{\delta}_1= \frac{2g^{-\frac{1}{4}}ku_1'(r_0)}{ u_1'(r_0)-v_1'(r_0) }-\frac{2}{ u_1'(r_0)-v_1'(r_0) }g^{-\frac{1}{4}}b_0.
\]

As previously, we determine $\delta_2,\tilde{\delta}_2,A_0,B_0$ by equating at main order in $s^0$ and $s^1$, the expansions of the inner and outer solutions. We get
$$\begin{cases}\delta_2& = \mu_1 g^{-1/4}k - 2\psi_0 \mu_1 \xi - \frac{n-1}{2r_0}\psi_0 \xi^2 +g^{-1/2} (A_0 \psi_0 +B_0 k)-2B_0 \psi_0 g^{-1/4}\xi  - b_0 g^{-1/4}\xi\\
\delta_2 u_1^\prime(r_0) +\delta_1^2 u_2^\prime(r_0) &=\psi_0 \mu_1^2 +2\mu_1 \frac{n-1}{r_0}\psi_0 \xi + b_0 \mu_1 g^{-1/4} +2 g^{-1/4} B_0 \psi_0\mu_1 \\
&- \dfrac{n-1}{ r_0^2}\xi^2  \psi_0   +( \frac{n-1}{ r_0^2}+(\frac{n-1}{r_0})^2 )  \psi_0 \xi^2 /2 +2g^{-1/4}B_0\psi_0\\
&\ \ \ \ +\frac{n-1}{r_0} \xi  g^{-1/4} (b_0+ 2B_0 \psi_0)+ g^{-1/4} f^\prime (0) (-k \xi + g^{1/4} \psi_0 \xi^2 /2)\\
&\ \ \ \  +g^{-1/2}   b_1    \\
\tilde{\delta}_2& = \mu_1 g^{-1/4}k + 2\psi_0 \mu_1 \xi + \frac{n-1}{2r_0}\psi_0 \xi^2 +g^{-1/2} (-A_0 \psi_0 +B_0 k)+2B_0 \psi_0 g^{-1/4}\xi  - b_0 g^{-1/4}\xi\\
\tilde{\delta}_2 v_1^\prime(r_0) +\tilde{\delta}_1^2 v_2^\prime(r_0) &=-\psi_0 \mu_1^2 -2\mu_1 \frac{n-1}{r_0}\psi_0 \xi + b_0 \mu_1 g^{-1/4} -2 g^{-1/4} B_0 \psi_0 \mu_1\\
&+ \dfrac{n-1}{ r_0^2}\xi^2  \psi_0   -( \frac{n-1}{ r_0^2}+(\frac{n-1}{r_0})^2 )  \psi_0 \xi^2 /2-2g^{-1/4}B_0\psi_0\\
&\ \ \ \ +\frac{n-1}{r_0} \xi  g^{-1/4} (b_0- 2B_0 \psi_0)+ g^{-1/4} f^\prime (0) (-k \xi - g^{1/4} \psi_0 \xi^2 /2)\\
&\ \ \ \  +g^{-1/2}   b_1     .
\end{cases}
$$
One can check that the previous system is solvable thanks to the fact that $u_1^\prime (r_0)\neq v_1^\prime (r_0)$. More precisely, the determinant of the above system   for $A_0,\ B_0,\ \delta_1,\ \tilde{\delta}_1$ is
\[
 2\psi_0\left(u_1'(r_0)-v_1'(r_0) \right)g^{-\frac{1}{4}}+2\left[ \frac{n-1}{2r_0}\left(u_1'(r_0)+v_1'(r_0) \right)k+\left(\frac{u_1'(r_0)+v_1'(r_0)}{2}-\frac{n-1}{r_0} \right)b_0\right]g^{-\frac{1}{2}}.
\] In fact, we obtain that
$$\delta_2= O(g^{-1/2}),\ \tilde{\delta}_2= O(g^{-1/2}),\ A_0=O(1),\ B_0=O(g^{-1/4}).$$

Finally, we determine $\delta_3,\tilde{\delta}_3,A_1,B_1$ by once more equating at main order in $s^0$ and $s^1$ the expansions of the inner and outer solutions. We find
$$\begin{cases}\delta_3 & = - \frac{n-1}{r_0}\psi_0 \mu_1 \xi^2 + \dfrac{n-1}{2 r_0^2}\xi^3  \psi_0    -( \frac{n-1}{ r_0^2}+(\frac{n-1}{r_0})^2 )  \psi_0 \xi^3 /6\\
&-\frac{n-1}{2r_0} \xi^2  g^{-1/4} (b_0+2B_0\psi_0)+ g^{-1/4} f^\prime (0) (k \xi^2 /2 -g^{1/4} \psi_0 \xi^3 /6)\\
&-g^{-1/2} ( b_1 \xi   + 2 B_1 \psi_0 \xi ) + g^{-3/4}(a_1+A_1\psi_0 +B_1 k) \\ &- g^{-1/4}\mu_1 \xi(b_0+2B_0 \psi_0  )-\psi_0\mu_1^2\xi \\
\delta_3 u_1^\prime(r_0) +2\delta_1 \delta_2 u_2^\prime(r_0) +\delta_1^3 u_3^\prime(r_0) &=  \frac{n-1}{r_0}\psi_0 \mu_1^2 \xi     \\
 &-2 \dfrac{n-1}{ r_0^2}\xi^2  \psi_0 \mu_1   +( \frac{n-1}{ r_0^2}+(\frac{n-1}{r_0})^2 ) \mu_1 \psi_0 \xi^2 \\
&\ \ \ \ +\frac{n-1}{r_0}\mu_1 \xi  g^{-1/4} (b_0+ 2B_0 \psi_0)+\mu_1 g^{-1/4} f^\prime (0) (-k \xi + g^{1/4} \psi_0 \xi^2 )\\
&\ \ \ \  +g^{-1/2}\mu_1 ( b_1    + 2  B_1 \psi_0  ) +2B_1\psi_0g^{-1/2}\\
\tilde{\delta}_3 & =  \frac{n-1}{r_0}\psi_0 \mu_1 \xi^2 - \dfrac{n-1}{2 r_0^2}\xi^3  \psi_0    +( \frac{n-1}{ r_0^2}+(\frac{n-1}{r_0})^2 )  \psi_0 \xi^3 /6\\
&-\frac{n-1}{2r_0} \xi^2  g^{-1/4} (b_0-2B_0\psi_0)+ g^{-1/4} f^\prime (0) (k \xi^2 /2 +g^{1/4} \psi_0 \xi^3 /6)\\
&-g^{-1/2} ( b_1 \xi   - 2 B_1 \psi_0 \xi ) + g^{-3/4}(a_1 -A_1\psi_0 +B_1 k) \\&- g^{-1/4}\mu_1 \xi(b_0-2B_0 \psi_0  )+\psi_0\mu_1^2\xi \\
\tilde{\delta}_3 v_1^\prime(r_0) +2\tilde{\delta}_1 \tilde{\delta}_2 v_2^\prime(r_0) + \tilde{\delta}_1^3 v_3^\prime(r_0) &= - \frac{n-1}{r_0}\psi_0 \mu_1^2 \xi     \\
 &+2 \dfrac{n-1}{ r_0^2}\xi^2  \psi_0 \mu_1   -( \frac{n-1}{ r_0^2}+(\frac{n-1}{r_0})^2 ) \mu_1 \psi_0 \xi^2 \\
&\ \ \ \ +\frac{n-1}{r_0}\mu_1 \xi  g^{-1/4} (b_0- 2B_0 \psi_0)+\mu_1 g^{-1/4} f^\prime (0) (-k \xi - g^{1/4} \psi_0 \xi^2 )\\
&\ \ \ \  +g^{-1/2}\mu_1 ( b_1    - 2  B_1 \psi_0  )-2B_1\psi_0g^{-1/2}.
\end{cases}
$$
As previously, since $u_1^\prime (r_0)\neq v_1^\prime (r_0)$, the system is solvable and we get
$$\delta_3= O(g^{-3/4}),\ \tilde{\delta}_3= O(g^{-3/4}),\ A_1=O(1),\ B_1=O(g^{-1/4}).$$

Let us also observe that our choice of parameters allows us to have a matching of the $s^2$ and $s^3$ terms up to order $g^{-1/2}$ and $g^{-1/4}$ respectively in the expansions of the outer and inner solutions. More precisely, using the equation of $u_0$, $u_1$ and the first and second lines of \eqref{defdelta1}, we can rewrite the $s^2$ term of \eqref{outerexp} as
\begin{align*}\frac{u_0^{\prime \prime}(r_0)+\delta u_1^{\prime \prime}(r_0) }{2}&=-\frac{n-1}{2 r_0}\psi_0 +\frac{\delta}{2} (-\frac{n-1}{r_0}u_1^\prime (r_0) +f^\prime (0))\\
&= -\frac{n-1}{2 r_0}\psi_0 +\frac{\delta_1}{2} (-\frac{n-1}{r_0}u_1^\prime (r_0) +f^\prime (0))+ O(g^{-1/2})\\
&=  -\frac{n-1}{2 r_0}\psi_0 - \frac{n-1}{2r_0} (2\psi_0 \mu_1 + \frac{n-1}{r_0}\psi_0 \xi + b_0 g^{-1/4})+ \frac{g^{-1/4}k -\xi \psi_0}{2} f^\prime (0)+ O(g^{-1/2}).
\end{align*}
On the other hand, the term of power $s^2$ in \eqref{innerexp2} + \eqref{innerexp3} can be rewritten as
\begin{align*}&-\frac{n-1}{2r_0}\psi_0 \mu^2+ \dfrac{n-1}{2 r_0^2}\xi  \psi_0 \mu^2   -( \frac{n-1}{ r_0^2}+(\frac{n-1}{r_0})^2 ) \mu^{2} \psi_0 \xi /2\\
& -\frac{n-1}{r_0}\mu   g^{-1/4} (b_0/2 + B_0 \psi_0)+\mu g^{-1/4} f^\prime (0) (k/2  - g^{1/4}\mu \psi_0 \xi /2) \\
 &=-\frac{n-1}{2r_0}\psi_0-\frac{n-1}{r_0}\psi_0 \mu_1   -(\frac{n-1}{r_0})^2  \psi_0 \xi /2\\
& -\frac{n-1}{2r_0}   g^{-1/4} b_0+ g^{-1/4} f^\prime (0) (k/2  - g^{1/4} \psi_0 \xi /2)+ O(g^{-1/2}) .
  \end{align*}
Concerning the terms in $s^3$, we notice differentiating the equation satisfied by $u_0$ that
$$u_0^{\prime \prime \prime}(r_0)/6= \frac{1}{6}\left( \frac{n-1}{r_0^2} \psi_0   +( \frac{n-1}{r_0})^2 \psi_0 +f^\prime (0) \psi_0 \right) .$$
 It is immediate to check that at main order, it is equal to the terms of power $s^3$ in \eqref{innerexp3}.
Analogous computations hold for the $v$ component.

The above analysis brings us to the following proposition, which will be needed in the upcoming subsection.
\begin{prop}\label{proDiffer}
The estimates
\[
|u_{out}(r)-u_{in}(r)|\leq  C|\ln g|^4g^{-1},
\]
\[
 |u_{out}'(r)-u_{in}'(r)|\leq  C|\ln g|^3g^{-\frac{3}{4}},
\]
\[
|u_{out}''(r)-u_{in}''(r)|\leq C|\ln g|^2g^{-\frac{1}{2}},
\]
hold  for $c |\ln g| g^{-\frac{1}{4}}\leq r-r_0 \leq C |\ln g| g^{-\frac{1}{4}}$.

Moreover, for any $c'>0$, we have that the following estimate
\[|u_{in}(r)|+g^{-\frac{1}{4}}|u_{in}'(r)|+g^{-\frac{1}{2}}|u_{in}''(r)|\leq Cg^{-\frac{1}{4}}e^{c'g^{\frac{1}{4}}(r-r_0)}\]
holds for $-C |\ln g| g^{-\frac{1}{4}}\leq r-r_0 \leq -c |\ln g| g^{-\frac{1}{4}}$.

Analogous estimates hold for the difference $v_{out}-v_{in}$.
\end{prop}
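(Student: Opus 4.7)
The plan is to extract the estimates directly from the polynomial-in-$s$ expansions assembled in the previous subsection, where $s = r - r_0$. In the overlap region $c|\ln g|g^{-1/4} \leq s \leq C|\ln g|g^{-1/4}$, I would write $u_{out}$ as a cubic polynomial in $s$ via \eqref{outerexp}, with remainder $\sum_{j=0}^{3} O(\delta^j s^{4-j}) = O(|\ln g|^4 g^{-1})$ since $\delta = O(g^{-1/4})$. In parallel, $u_{in} = \mu g^{-1/4}U(t) + \varphi_0(t) + \varphi_1(t)$ would be expanded as a cubic polynomial in $s$ via \eqref{innerexp3}, \eqref{innerexp1}, \eqref{innerexp4}, with exponentially small remainders of the form $O(g^{-1/2}e^{-c'|t|})$ that are super-algebraically negligible because $|t| \asymp |\ln g|$ in the matching region.

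By the construction in the previous subsection, the parameters $\mu_1, \xi, \delta_i, \tilde{\delta}_i, A_j, B_j$ have been tuned so that the $s^0$ and $s^1$ coefficients of $u_{out} - u_{in}$ vanish through order $g^{-3/4}$ (residuals of size $O(g^{-1})$ and $O(g^{-3/4})$ respectively), the $s^2$ coefficient vanishes through order $g^{-1/2}$, and the $s^3$ coefficient vanishes through order $g^{-1/4}$. In the matching region each residual $s^k$ contribution is therefore bounded by $C|\ln g|^k g^{-1}$ for $k \in \{0,1,2,3\}$, while the outer quartic remainder already fits $O(|\ln g|^4 g^{-1})$; summing these yields the $C^0$ bound. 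The first and second derivative estimates follow by differentiating the polynomial representations term by term: each derivative in $r$ converts $s^k$ into $k s^{k-1}$, losing a factor $s^{-1} = O(|\ln g|^{-1} g^{1/4})$, which gives the stated bounds $O(|\ln g|^3 g^{-3/4})$ and $O(|\ln g|^2 g^{-1/2})$ respectively, while the exponentially small remainders only acquire $O(g^{1/4})$ per differentiation and stay negligible.

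For the opposite side $-C|\ln g|g^{-1/4} \leq s \leq -c|\ln g|g^{-1/4}$, the bound $\mu\xi g^{1/4} = O(1)$ forces $t$ into an interval of the form $[-C'|\ln g|, -c'|\ln g|]$. On this side $Z \equiv 0$ by its definition; Proposition \ref{propoUV} gives $U(t) = O(e^{-ct^2})$; and Proposition \ref{proSymA}, applied to the defining problem for $(\phi_0, \tilde{\phi}_0)$ as well as to the analogous problem for $\varphi_1$, delivers $\varphi_0(t), \varphi_1(t) = O(g^{-1/2}e^{-c''|t|})$, where $c'' > 0$ may be chosen arbitrarily large thanks to the super-exponential decay of the corresponding right-hand sides. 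Adding the contributions,
\[
|u_{in}(r)| \leq C g^{-1/4} e^{-c t^2} + C g^{-1/2} e^{-c'' |t|},
\]
which for any prescribed $c' > 0$ is dominated by $Cg^{-1/4}e^{c' g^{1/4}(r - r_0)}$ once $c''$ is chosen with $c'' > c'/\mu$ and one uses $|t| \geq c|\ln g|$. Each additional differentiation brings a factor $\mu g^{1/4}$, exactly matched by the $g^{-1/4}$ and $g^{-1/2}$ weights on the left-hand side of the proposed estimate. The same reasoning, exploiting the mirror symmetry \eqref{eqRefUV} to interchange the roles of $(U, \varphi_i)$ and $(V, \tilde{\varphi}_i)$, produces the bounds for $v_{out} - v_{in}$ and for $|v_{in}|$ on the side $s > 0$.

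The main technical obstacle is the matching book-keeping at order $s^3$: the coefficient $u_0'''(r_0)/6$ must coincide, to the required $g$-order, with the $s^3$ contribution produced jointly by $\mu g^{-1/4}U(t)$, $\varphi_0(t)$, and $\varphi_1(t)$. This identification is secured by differentiating the ODE $\Delta u_0 = f(u_0)$ at $r = r_0$ and recognising the curvature and $f'(0)$ terms, exactly as indicated at the end of the previous subsection. The control of the residual at $O(g^{-1/4})$ at every stage of the parameter adjustment hinges on the solvability of the successive $4 \times 4$ matching systems, which in turn is equivalent to the non-degeneracy inequality $u_1'(r_0) \neq v_1'(r_0)$ recorded in \eqref{eqNondegen}.
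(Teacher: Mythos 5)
Your proposal follows essentially the same route as the paper: compare the polynomial-in-$s$ expansions of $u_{out}$ (from (\ref{outerexp})) and $u_{in}$ (from (\ref{innerexp1}), (\ref{innerexp3}), (\ref{innerexp4})), use the matching of parameters to cancel the $s^0,\dots,s^3$ coefficients to the required $g$-orders, substitute $s = O(|\ln g|\,g^{-1/4})$, differentiate term by term, and for $s<0$ invoke the super-exponential decay of $U(t)=V(-t)$, $\varphi_0$, $\varphi_1$ with the free constant $c'$ taken large. One small inaccuracy: the $s^1$ coefficient residual is $O(g^{-1})$, not $O(g^{-3/4})$ as you state (both are matched up to order $g^{-3/4}$, leaving $O(g^{-1})$), but this conservative over-estimate does not affect the final bound.
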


\begin{proof}
By the previous analysis, recalling especially (\ref{outerexp}), (\ref{innerexp1}), (\ref{innerexp3}) and (\ref{innerexp4}), we obtain that
\[
u_{out}(r)-u_{in}(r)=O(g^{-1})+O(g^{-1})s+O(g^{-\frac{1}{2}})s^2+O(g^{-\frac{1}{4}})s^3+\sum_{j=0}^{3} {O(g^{-\frac{j}{4}} s^{4-j})},
\]
with $s=r-r_0$, uniformly for $c |\ln g| g^{-\frac{1}{4}}\leq r-r_0 \leq C |\ln g| g^{-\frac{1}{4}}$ (having chosen $c'$ in (\ref{innerexp1}) and (\ref{innerexp4}) sufficiently large). Moreover, the above relation can be differentiated in the obvious fashion. The first assertion of the proposition now follows at once. The second assertion is a direct consequence of the super-exponential decay of $V,\varphi_0,\varphi_1$ with respect to $t\to -\infty$.
\end{proof}
\section{Gluing the outer and inner approximate solutions: The global approximate solution $(u_{ap},v_{ap})$}
We define a global approximate solution to the problem, valid in the whole ball $r\in (0,1)$, by   gluing the inner approximate solution  with the outer one as follows:
\[
(u_{ap},v_{ap})=\left(u_{in}+\zeta (u_{out}-u_{in}),v_{in}+\zeta (v_{out}-v_{in})\right),
\]
where the cutoff function
\[
\zeta(r)=\left\{\begin{array}{cc}
                  0 & \textrm{if}\ |r-r_0|\leq |\ln g|g^{-\frac{1}{4}}, \\
                    &   \\
                  1 & \textrm{if}\ |r-r_0|\geq 2|\ln g|g^{-\frac{1}{4}},
                \end{array}
 \right.
\]
satisfies
\begin{equation}\label{eqcutt}
0\leq \zeta\leq 1,\ |\zeta'|\leq C|\ln g|^{-1}g^\frac{1}{4},\ |\zeta''|\leq C|\ln g|^{-2}g^\frac{1}{2}.
\end{equation}

The following estimates hold.
\begin{prop}\label{proRemainderGlobal}
The remainder
\begin{equation}\label{eqRemainder}
\mathcal{R}=\left(\begin{array}{c}
                    -u_{ap}''-\frac{n-1}{r}u_{ap}'+f(u_{ap})+gv_{ap}^2u_{ap} \\
                     \\
                    -v_{ap}''-\frac{n-1}{r}v_{ap}'+h(v_{ap})+gu_{ap}^2v_{ap}
                  \end{array}
 \right)
\end{equation}
satisfies
\[
\mathcal{R}=O\left(g^{-\frac{1}{2}} \right)\left(\begin{array}{c}
                                                   |\ln g|^4 \\

                                                   e^{-2g^\frac{1}{4}(r-r_0)}
                                                 \end{array}
 \right) \ \textrm{if}\ 0\leq r-r_0\leq 2|\ln g|g^{-\frac{1}{4}},\ \ \mathcal{R}=\left(\begin{array}{c}
                                                   0 \\

                                                   0
                                                 \end{array}
 \right)\  \textrm{if}\ r-r_0\geq 2|\ln g|g^{-\frac{1}{4}},
\]
and the obvious analogue of the above relation holds for $r\leq r_0$.
\end{prop}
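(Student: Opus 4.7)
The plan is to split the domain into the three natural regimes created by the cutoff $\zeta$ and treat them separately. In the far outer regime $|r-r_0|\geq 2|\ln g|g^{-1/4}$ one has $\zeta\equiv 1$, so $(u_{ap},v_{ap})=(u_{out},v_{out})$; on the side $r>r_0$ this equals $(u_\delta,0)$, and the first equation vanishes because $u_\delta$ satisfies \eqref{outapprox} and $v_{ap}\equiv 0$ kills the coupling, while the second vanishes because $v_{ap}\equiv 0$ and $h(0)=0$. The opposite side is symmetric, so $\mathcal{R}\equiv 0$ there. In the inner regime $|r-r_0|\leq |\ln g|g^{-1/4}$ one has $\zeta\equiv 0$, so $(u_{ap},v_{ap})=(u_{in},v_{in})$ and Proposition \ref{propInnerRemain} delivers the claimed bound directly, with the exponent $c'=2$.

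The real work is in the intermediate zone $|\ln g|g^{-1/4}\leq |r-r_0|\leq 2|\ln g|g^{-1/4}$. Writing $u_{ap}=(1-\zeta)u_{in}+\zeta u_{out}$ (and similarly for $v_{ap}$) and expanding the operator $-u''-\frac{n-1}{r}u'+f(u)+gv_{ap}^2 u$, I separate the result into four pieces: (i) the convex combination $(1-\zeta)\mathcal{R}^u_{in}+\zeta\mathcal{R}^u_{out}$, in which $\mathcal{R}^u_{out}\equiv 0$ on the side $r>r_0$ and $(1-\zeta)\mathcal{R}^u_{in}$ is bounded by Proposition \ref{propInnerRemain}; (ii) the cutoff cross-terms $\zeta''(u_{in}-u_{out})$, $2\zeta'(u_{in}'-u_{out}')$ and $\frac{n-1}{r}\zeta'(u_{in}-u_{out})$, which I estimate by combining \eqref{eqcutt} with Proposition \ref{proDiffer} to get respectively $O(|\ln g|^2 g^{-1/2})$, $O(|\ln g|^2 g^{-1/2})$ and $O(|\ln g|^3 g^{-3/4})$; (iii) the nonlinear discrepancy $f(u_{ap})-(1-\zeta)f(u_{in})-\zeta f(u_{out})=O(|u_{in}-u_{out}|^2)=O(|\ln g|^8 g^{-2})$ by smoothness of $f$ and Proposition \ref{proDiffer}; and (iv) the coupling $gv_{ap}^2 u_{ap}=g(1-\zeta)^2 v_{in}^2 u_{ap}$ on the $r>r_0$ side, which is super-polynomially small because $V(t)=O(e^{-ct^2})$ at $t\sim|\ln g|$ (Proposition \ref{propoUV}). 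Summing all pieces yields the stated $O(|\ln g|^4 g^{-1/2})$ bound. For $\mathcal{R}^v$ on $r>r_0$ I use $v_{ap}=(1-\zeta)v_{in}$, substitute the inner equation for $-v_{in}''-\frac{n-1}{r}v_{in}'$, and observe that every term except $(1-\zeta)\mathcal{R}^v_{in}$ carries a factor of $v_{in}$ or $v_{in}^2$ and is therefore super-polynomially small in $g$ after multiplying by $\zeta',\zeta''$ or $gu_{ap}^2$; Proposition \ref{propInnerRemain} with $c'=2$ then gives the claimed $O(g^{-1/2})e^{-2g^{1/4}(r-r_0)}$. The analogue for $r<r_0$ follows by repeating the same argument with the roles of $u$ and $v$ exchanged.

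The main obstacle is reconciling the cutoff cross-terms with the target bound: because $\zeta'$ and $\zeta''$ grow like $|\ln g|^{-1}g^{1/4}$ and $|\ln g|^{-2}g^{1/2}$ respectively, the naive size of $|u_{in}-u_{out}|$ coming from bare Taylor expansions is far too crude to give a viable estimate. The sharp matching carried out through three orders of $\delta,\tilde\delta$ in the outer expansion and through the correctors $\varphi_0,\varphi_1$ in the inner expansion, recorded in Proposition \ref{proDiffer} as $|u_{in}-u_{out}|=O(|\ln g|^4 g^{-1})$ with successive derivatives $O(|\ln g|^3 g^{-3/4})$ and $O(|\ln g|^2 g^{-1/2})$, is precisely what keeps each cutoff cross-term within the stated threshold; once one trusts that matching, all remaining ingredients are routine.
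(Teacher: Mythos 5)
Your proof is correct and is essentially the same argument as the paper's. The only cosmetic difference is the algebraic split in the intermediate zone: you write $u_{ap}=(1-\zeta)u_{in}+\zeta u_{out}$ and compare $\mathcal{R}^u_{ap}$ to the convex combination $(1-\zeta)\mathcal{R}^u_{in}+\zeta\mathcal{R}^u_{out}$ (with $\mathcal{R}^u_{out}\equiv 0$ absorbing the $\zeta$-weighted derivative differences), whereas the paper anchors the comparison at the pure inner remainder $\mathcal{R}^u_{in}$ and keeps the extra $\zeta(u_{out}''-u_{in}'')$ and $\frac{n-1}{r}\zeta(u_{out}'-u_{in}')$ terms, bounding them by Proposition~\ref{proDiffer} as well. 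Both versions invoke exactly Proposition~\ref{propInnerRemain}, Proposition~\ref{proDiffer}, and \eqref{eqcutt}, and land on the same estimates; your piece~(iii) is in fact a slightly sharper $O(|u_{in}-u_{out}|^2)$ bound than the paper's $O(|u_{ap}-u_{in}|)$, though the cruder bound already suffices. One small point worth tightening: the quantity you should identify as piece~(iv) is the coupling \emph{discrepancy} $gv_{ap}^2u_{ap}-(1-\zeta)gv_{in}^2u_{in}-\zeta gv_{out}^2u_{out}$ rather than the full coupling $gv_{ap}^2u_{ap}$, otherwise you double-count the $gv_{in}^2u_{in}$ already inside $(1-\zeta)\mathcal{R}^u_{in}$; as you note, each of these contributions is super-polynomially small in the intermediate zone, so the conclusion is unaffected.
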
\begin{proof}
In the inner region $|r-r_0|\leq |\ln g|g^{-\frac{1}{4}}$, the assertion follows from the first assertion of Proposition \ref{propInnerRemain}, while in the outer region $|r-r_0|\geq 2|\ln g|g^{-\frac{1}{4}}$ it follows at once from the definition of the outer approximate solution. It thus remains to consider the intermediate zone $|\ln g|g^{-\frac{1}{4}} \leq |r-r_0|\leq 2|\ln g|g^{-\frac{1}{4}}$. In this intermediate zone and for $r>r_0$ we find that
\begin{equation}\label{eqLong}\begin{array}{rcl}
    -u_{ap}''-\frac{n-1}{r}u_{ap}'+f(u_{ap})+gv_{ap}^2u_{ap} & = & -u_{in}''-\frac{n-1}{r}u_{in}'+f(u_{in})+gv_{in}^2u_{in} \\
      &   &   \\
      &  & +f(u_{ap})-f(u_{in})+gv_{ap}^2u_{ap}-gv_{in}^2u_{in} \\
      & &\\
      &   &  -\zeta''(u_{out}-u_{in})-2\zeta'(u_{out}'-u_{in}')-\zeta (u_{out}''-u_{in}'')\\
      & & \\
      & &-\frac{n-1}{r}\zeta'(u_{out}-u_{in})-\frac{n-1}{r}\zeta(u_{out}'-u_{in}').
  \end{array}\end{equation}

The first line of the righthand side of the above relation is of the desired order by Proposition \ref{propInnerRemain}. Concerning the second line, from Proposition \ref{proDiffer} we obtain that
\[
|f(u_{ap})-f(u_{in})|\leq C|u_{ap}-u_{in}|=C|\zeta(u_{out}-u_{in})|\leq C|\ln g|^4g^{-1}.
\]
Moreover, recalling that $v_{out}=0$  in this region, we find that
\[
gv_{ap}^2u_{ap}-gv_{in}^2u_{in}=g(1-\zeta)^2v_{in}^2u_{ap}-gv_{in}^2u_{in}=gv_{in}^2\left((1-\zeta)^2u_{ap}-u_{in}\right).\]
The above term is much smaller than what we need thanks to the corresponding second assertion of Proposition \ref{proDiffer}. The terms in the last two lines of (\ref{eqLong}) can be estimated as desired by combining the first assertion of Proposition \ref{proDiffer} and (\ref{eqcutt}). On the other hand, the second equation of the remainder can be estimated analogously by making use of Proposition \ref{propInnerRemain} and the corresponding second assertion of Proposition \ref{proDiffer}. The case with $r<0$ in the intermediate zone can be treated identically.
\end{proof}

\section{Linear analysis}\label{seclinear}

In this section we will study the invertibility properties of the linearization of (\ref{eq2comp}) about the approximate solution $(u_{ap},v_{ap})$ between carefully chosen weighted spaces.

The following lemma will be useful in the proof of the upcoming proposition.
\begin{lem}\label{lemBlowUP}
Given any $\rho>0$, there exist  constants $C,\ g_0>0$ such that the following a-priori estimates hold:

If
\[\left\{\begin{array}{l}
    -{\psi}''-\frac{N-1}{r}{\psi}'+gu_{ap}^2{\psi}=h,\ \ r\in \left(1-\rho (\ln g)g^{-\frac{1}{4}},1\right), \\
    \psi\left(1-\rho (\ln g)g^{-\frac{1}{4}}\right)=\psi(1)=0,
  \end{array}\right.
\]
then
\[
\|\psi \|_{L^\infty\left(1-\rho (\ln g)g^{-\frac{1}{4}},1\right)}\leq   C   g^{-\frac{1}{2}}\|h \|_{L^\infty\left(1-\rho (\ln g)g^{-\frac{1}{4}},1\right)},
\]
provided that $g \geq g_0$.

If
\[\left\{\begin{array}{l}
    -{\psi}''-\frac{N-1}{r}{\psi}'+gu_{ap}^2{\psi}=(1-r)^2h,\ \ r\in \left(1-\rho (\ln g)g^{-\frac{1}{4}},1\right), \\
    \psi\left(1-\rho (\ln g)g^{-\frac{1}{4}}\right)=\psi(1)=0,
  \end{array}\right.
\]
then
\[
\|\psi \|_{L^\infty\left(1-\rho (\ln g)g^{-\frac{1}{4}},1\right)}\leq   C   g^{-1}\|h \|_{L^\infty\left(1-\rho (\ln g)g^{-\frac{1}{4}},1\right)},
\]
provided that $g \geq g_0$.
\end{lem}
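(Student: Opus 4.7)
The plan is to prove both estimates by combining the maximum principle with the structure of $u_{ap}$ near the outer boundary $r=1$; the second estimate reduces quickly to a constant barrier, while the first requires a blow-up/compactness argument.

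\textbf{Step 1 (Near-boundary profile of $u_{ap}$).} First I would observe that for $g$ sufficiently large, the interval $(1-\rho(\ln g)g^{-1/4},1)$ lies entirely inside $\{r-r_0\geq 2|\ln g|g^{-1/4}\}$, so $u_{ap}\equiv u_\delta$ there. Since $\delta=O(g^{-1/4})$ and $u_\delta$ depends smoothly on $\delta$, $u_{ap}$ is $C^1$-close to $u_0=w^+$ on the interval. As $u_0>0$ on $(r_0,1)$, $u_0(1)=0$, and Hopf's lemma gives $u_0'(1)<0$ (applied to $\Delta u_0=f(u_0)$ with $f(0)=0$), one obtains uniform constants $0<c_1\leq c_2$ with
\[
c_1(1-r)\leq u_{ap}(r)\leq c_2(1-r),\qquad r\in\bigl[1-\rho(\ln g)g^{-1/4},1\bigr].
\]

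\textbf{Step 2 (Second estimate, constant supersolution).} For the second estimate take $\bar\psi\equiv c_1^{-2}g^{-1}\|h\|_{L^\infty}$. Since $\bar\psi$ is constant and $u_{ap}^2\geq c_1^2(1-r)^2$, the operator $L\psi:=-\psi''-\tfrac{N-1}{r}\psi'+gu_{ap}^2\psi$ satisfies
\[
L\bar\psi=gu_{ap}^2\bar\psi\geq (1-r)^2\|h\|_{L^\infty}\geq (1-r)^2 h.
\]
As $\bar\psi\geq 0=\psi$ on the boundary, the weak maximum principle for $L$ (whose zeroth-order coefficient is nonnegative) yields $\psi\leq\bar\psi$; applying the same argument to $-\psi$ gives $\|\psi\|_{L^\infty}\leq c_1^{-2}g^{-1}\|h\|_{L^\infty}$.

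\textbf{Step 3 (First estimate, contradiction setup).} The first estimate I would argue by contradiction. If it fails, there exist $g_n\to\infty$ and $\psi_n,h_n$ with $\|\psi_n\|_{L^\infty}=1$ (after normalization) and $\eta_n:=g_n^{-1/2}\|h_n\|_{L^\infty}\to 0$. Let $r_n^*\in(1-\rho(\ln g_n)g_n^{-1/4},1)$ attain maximal modulus; up to sign, $\psi_n(r_n^*)=1$. At this interior maximum, $\psi_n'(r_n^*)=0$ and $\psi_n''(r_n^*)\leq 0$, so the equation at $r_n^*$ forces
\[
g_n u_{ap}(r_n^*)^2\leq h_n(r_n^*)\leq g_n^{1/2}\eta_n.
\]
Combined with Step 1, $\delta_n:=g_n^{1/4}(1-r_n^*)\leq\sqrt{\eta_n}/c_1\to 0$, and in particular $\epsilon_n:=1-r_n^*\to 0$.

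\textbf{Step 4 (Blow-up and compactness).} Rescale via $\hat\psi_n(\tau):=\psi_n(1-\tau\epsilon_n)$, defined on an interval that contains $[0,1]$ for $n$ large. Then $\hat\psi_n(0)=0$, $\hat\psi_n(1)=1$, $|\hat\psi_n|\leq 1$, and $\hat\psi_n'(1)=0$. Multiplying the equation by $\epsilon_n^2$ gives
\[
-\hat\psi_n''(\tau)+\epsilon_n\tfrac{N-1}{1-\tau\epsilon_n}\hat\psi_n'(\tau)+\epsilon_n^2 g_n u_{ap}^2\hat\psi_n(\tau)=\epsilon_n^2 h_n,
\]
where on every compact $\tau$-interval all coefficients and the right-hand side vanish uniformly: $\epsilon_n\to 0$, $g_n\epsilon_n^4=\delta_n^4\to 0$, and $\epsilon_n^2\|h_n\|_{L^\infty}=\delta_n^2\eta_n\to 0$. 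Elliptic regularity yields local $C^2$ bounds on $\hat\psi_n$, and Arzel\`a--Ascoli extracts a $C^1_{loc}([0,\infty))$ limit $\hat\psi$ satisfying $\hat\psi''=0$ with $\hat\psi(0)=0$, $\hat\psi(1)=1$, $\hat\psi'(1)=0$. These three conditions are incompatible for any linear function, the desired contradiction.

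\textbf{Main obstacle.} The delicate part is Step 4: verifying that the blow-up rate $\epsilon_n=1-r_n^*$ (rather than the naive $g_n^{-1/4}$) is exactly right so that every lower-order term of the rescaled equation really vanishes on compact sets, and that $C^1_{loc}$ convergence preserves both the interior-max condition $\hat\psi'(1)=0$ and the Dirichlet condition $\hat\psi(0)=0$. Everything else is routine maximum principle and elliptic regularity.
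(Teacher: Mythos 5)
Your proof is correct and self-contained, and it gives a slightly different route than the paper, whose own proof is a one-line citation to Prop.~3.23 of \cite{karaliSourdisAN} for a blow-up argument covering both estimates. Concretely, you handle the second estimate by the explicit constant barrier $\bar\psi\equiv c_1^{-2}g^{-1}\|h\|_\infty$ together with the lower bound $u_{ap}\geq c_1(1-r)$ coming from Hopf's lemma applied to $u_0$ and the $C^1$-closeness of $u_\delta$ to $u_0$; this is more elementary than a compactness argument and gives the sharp $g^{-1}$ rate with a clean constant. For the first estimate, your contradiction/blow-up at the scale $\epsilon_n=1-r_n^*$ (rather than the naive $g_n^{-1/4}$) is exactly the right normalization: the step $g_n u_{ap}^2(r_n^*)\le\|h_n\|_\infty$ from the interior-max test forces $\delta_n=g_n^{1/4}\epsilon_n\to0$, which is precisely what makes all lower-order terms of the rescaled equation go to zero on compacta and simultaneously ensures the rescaled domain contains $[0,M]$ for any $M$. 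The limit ODE $\hat\psi''=0$ with the three pinned conditions $\hat\psi(0)=0$, $\hat\psi(1)=1$, $\hat\psi'(1)=0$ is indeed overdetermined and yields the contradiction. The only cosmetic imprecision is in Step 4: with only an $L^\infty$ (or $C^0$) right-hand side, interior/boundary estimates give uniform $W^{2,p}$ (hence $C^{1,\alpha}$) bounds rather than $C^2$ bounds, but $C^1_{\mathrm{loc}}$ convergence is all you need to pass the Dirichlet condition at $\tau=0$ and the conditions $\hat\psi(1)=1$, $\hat\psi'(1)=0$ at the interior point to the limit, so this changes nothing.
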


\begin{proof}
The above a-priori estimates follow from a blow-up argument, using that
\[
u_{ap}^2(r)=u_{0}^2(r)=\left(u_0'(1)\right)^2(1-r)^2+O\left((1-r)^3 \right)\ \ \textrm{as}\ \ r\to 1^-,
\]
keeping in mind that $u_0'(1)<0$ (from Hopf's boundary point lemma).
We refer to \cite[Prop. 3.23]{karaliSourdisAN} for the details.
\end{proof}
Our main result in the section  is the following.

\begin{prop}\label{proLinear}
	Suppose that
\[
\mathcal{L}\left(\begin{array}{c}
\phi\\

\psi
\end{array}\right)=\left(\begin{array}{c}
F\\
H
\end{array}\right),\ r\in (0,1);\ \ \phi'(0)=\phi(1)=0,\ \psi'(0)=\psi(1)=0,
\]
where $F,H\in C[0,1]$ and
	\[
	\mathcal{L}\left(\begin{array}{c}
	\phi\\
	\\
	\psi
	\end{array}\right)\equiv\left(\begin{array}{c}
	-\phi''-\frac{N-1}{r}\phi'+f'(u_{ap})\phi+gv_{ap}^2\phi+2gu_{ap}v_{ap}\psi\\
	\\
	-\psi''-\frac{N-1}{r}\psi'+h'(v_{ap})\psi+gu_{ap}^2\psi+2gu_{ap}v_{ap}\phi
	\end{array}\right).
	\]
Then, given $\gamma \in (0,1)$, there exist $C,g_0>0$, independent of $(F,H)$ and $(\phi,\psi)$, such that
\[
\| (\phi,\psi)\|_1\leq Cg^{-\frac{1}{4}}\|(F,H)\|_2,
\] 	
where
\begin{equation}\label{eqPnorm0}
\|(\Phi,\Psi) \|_i=\|w_i(r-r_0)\Phi\|_{L^\infty(0,1)}+\|w_i(r_0-r)\Psi\|_{L^\infty(0,1)}, \ \ i=1,2,
\end{equation}
with
\[
w_1(s)=\left\{\begin{array}{ll}
1,& s\geq 0,\\
\\
e^{g^\frac{1}{4}|s|},&s<0,
\end{array}\right.
\ \ \
w_2(s)=\left\{\begin{array}{ll}
1+|g^\frac{1}{4}s|^{1+\gamma},& s\geq 0,\\
\\
e^{g^\frac{1}{4}|s|},&s<0,
\end{array}\right.
\]
provided that $g\geq g_0$.
\end{prop}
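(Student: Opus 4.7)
I argue by contradiction combined with a three-scale blow-up. Suppose the estimate fails: there exist sequences $g_n\to+\infty$, $(\phi_n,\psi_n)$ and $(F_n,H_n)$ solving the system with $\|(\phi_n,\psi_n)\|_1=1$ and $\|(F_n,H_n)\|_2=o(g_n^{1/4})$. The weight $w_1$ encodes that $\phi_n$ is $O(1)$ on the $u$-dominated side $\{r\geq r_0\}$ but exponentially small on the $v$-dominated side, where $gv_{ap}^2$ acts as a large confining potential, and symmetrically for $\psi_n$; this latter smallness is in any case recovered by a barrier argument with supersolution $e^{-g_n^{1/4}|r_0-r|}$.

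The heart of the proof is to identify the three limits. For the \emph{inner limit}, set $\tilde\phi_n(t)=\phi_n(r_0+g_n^{-1/4}t)$, $\tilde\psi_n(t)=\psi_n(r_0+g_n^{-1/4}t)$. Dividing the rescaled equations by $g_n^{1/2}$ and using that $g_n^{1/2}v_{ap}^2\to V^2$ and $g_n^{1/2}u_{ap}v_{ap}\to UV$ on the inner scale, the right-hand side $g_n^{-1/2}(F_n,H_n)$ is dominated by $o(1)/(1+|t|^{1+\gamma})$ on the $u$-side (by the weight $w_2$) and is exponentially small on the $v$-side. Passing to the limit yields $(\tilde\Phi,\tilde\Psi)$ solving $L(\tilde\Phi,\tilde\Psi)=0$ on $\mathbb{R}$, with $\tilde\Phi$ bounded on $[0,+\infty)$ and decaying as $t\to-\infty$, and $\tilde\Psi$ bounded on $(-\infty,0]$ and decaying as $t\to+\infty$. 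The linear nondegeneracy of $(U,V)$ recalled in the introduction forces $(\tilde\Phi,\tilde\Psi)=c(U',V')$ for some $c\in\mathbb{R}$. For the \emph{outer limits}, on compact subsets of $(r_0,1]$, $\phi_n\to\Phi_\infty$ solving $-\Phi_\infty''-\frac{N-1}{r}\Phi_\infty'+f'(u_0)\Phi_\infty=0$ with $\Phi_\infty(1)=0$; matching with the inner asymptotics $U'(+\infty)=\psi_0$ yields the boundary value $\Phi_\infty(r_0)=c\psi_0$, and the radial nondegeneracy of $u_0$ gives $\Phi_\infty=c\psi_0\,u_1$. Analogously $\psi_n\to\Psi_\infty=-c\psi_0\,v_1$ on compact subsets of $[0,r_0)$.

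The gluing $z=\Phi_\infty$ on $(r_0,1]$ and $z=-\Psi_\infty$ on $[0,r_0)$ is continuous at $r_0$ (both equal $c\psi_0$) and solves the linearization of (\ref{eqlimit}) about $w$ on each side of the interface. For $z$ to be a genuine radial element of the kernel, its first derivative must match at $r_0$, i.e.\ $c\psi_0\,u_1'(r_0)=c\psi_0\,v_1'(r_0)$. The first nondegeneracy hypothesis on $w$, which as noted at (\ref{eqNondegen}) is precisely equivalent to $u_1'(r_0)\neq v_1'(r_0)$, then forces $c=0$, so that both the inner and outer limits vanish identically.

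It remains to upgrade these locally-uniform vanishings to $\|(\phi_n,\psi_n)\|_1\to 0$, which contradicts the normalization. Compact subsets of $(r_0,1)$ and $(0,r_0)$ are handled by the outer limits, the region $|r-r_0|\leq R g_n^{-1/4}$ by the inner blow-up, and the intermediate zone $Rg_n^{-1/4}\leq r-r_0\leq\eta$ by a maximum principle applied to the outer equation with small coefficients and vanishing boundary traces at the two ends. Near $r=1$, Lemma \ref{lemBlowUP} absorbs $\psi_n$ via the large potential $gu_{ap}^2$, while $\phi_n$ is controlled by standard estimates since $v_{ap}\equiv 0$ there. The main difficulty is the bookkeeping of the matching constants so that a \emph{single} nondegeneracy assumption on $w$ closes the loop; this is made possible by the mirror symmetry (\ref{eqRefUV}), which ensures the same constant $c$ enters the two outer boundary values with opposite signs, reducing the matching to the single scalar condition $c(u_1'(r_0)-v_1'(r_0))=0$.
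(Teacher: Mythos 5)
Your blow-up-by-contradiction architecture—normalize $\|(\phi_n,\psi_n)\|_1=1$, pass to inner and outer limits, invoke the one-dimensional kernel $\sigma(U',V')$ from \cite{berestycki1}, and let the nondegeneracy of $w$ kill the outer limit—is indeed the one used in the paper, as are the barrier estimates on the $v$-side, Lemma \ref{lemBlowUP} near $r=1$, and the final upgrade to uniform smallness. But the step you label ``matching with the inner asymptotics yields $\Phi_\infty(r_0)=c\psi_0$'' is precisely the part of the proof that cannot be handled by formal matched asymptotics, and this is where your argument breaks.

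The inner convergence $(\Phi_n,\Psi_n)\to\sigma(U',V')$ holds only in $C^1_{loc}(\mathbb{R})$, i.e.\ for fixed $t$, while the outer convergence $\phi_n\to\phi_\infty$ holds only on compact subsets of $(r_0,1]$. There is no overlap region on which both convergences are uniform, so you cannot conclude $\phi_\infty(r_0^+)=\sigma U'(+\infty)$ without a quantitative control in the intermediate zone $Rg_n^{-1/4}\le r-r_0\le\eta$—and your only proposed tool there (a maximum principle ``with vanishing boundary traces at the two ends'') presupposes the outer trace already vanishes, which is the conclusion you are trying to reach. Worse, the derivative relation $\phi_\infty'(r_0)+\psi_\infty'(r_0)=0$, which you also need, is not even asserted; you argue ``for $z$ to be a genuine radial element of the kernel its derivative must match,'' but there is nothing forcing $z$ to be in the kernel unless you first prove the derivatives match. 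Indeed, a naive derivative matching fails outright: on the inner scale $\phi_n'\sim g^{1/4}\sigma U''(t)$ with $U''$ decaying super-exponentially, which carries no information about $\phi_\infty'(r_0)\ne 0$. With only the value relation $\lambda_++\lambda_-=0$ one cannot invoke $u_1'(r_0)\ne v_1'(r_0)$ to conclude $\lambda_\pm=0$.

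The paper closes this gap with the Pohozaev-type identity (\ref{eqPBig}): one multiplies the system by $r^{N-1}(\Phi_i,\Psi_i)$, where $(\Phi_i,\Psi_i)$, $i=1,2$, are \emph{corrected} near-kernel elements of the perturbed operator $\mathtt{L}_g$ built from $(U',V')$ and $(tU'+U,tV'+V)$ via Proposition \ref{proSymA}, integrates over $(r_0-\delta,r_0+\delta)$, and sends $n\to\infty$ then $\delta\to 0$. The test functions carry the $t\to\pm\infty$ asymptotics of $U',V'$ and $tU'+U,tV'+V$, so the boundary terms produce exactly $\phi_\infty'(r_0)+\psi_\infty'(r_0)$ ($i=1$) and, combined with the first, $\phi_\infty(r_0)+\psi_\infty(r_0)$ ($i=2$); the linear growth of the second kernel element is what supplies the missing derivative information, and the $O(g^{-1/4})$ corrections to the test functions are needed so that the bulk integral $\langle\mathtt{L}_g(\Phi_i,\Psi_i),(\phi,\psi)\rangle$ is $O(\delta)$ rather than $O(1)$. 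You would need either to reproduce this duality argument, or to supply a genuinely different quantitative matching in the intermediate zone; as written, the proof does not go through.
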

\begin{proof}
Suppose to the contrary that there exist $g_n\to +\infty$ and pairs $(F_n,H_n)\in C[0,1]$, $(\phi_n,\psi_n)\in C^2(0,1)\cap C[0,1]$ such that	
\begin{equation}\label{eqPtest}
\mathcal{L}\left(\begin{array}{c}
\phi_n\\

\psi_n
\end{array}\right)=\left(\begin{array}{c}
F_n\\
H_n
\end{array}\right);\ \ \phi_n'(0)=\phi_n(1)=0,\ \psi_n'(0)=\psi_n(1)=0,
\end{equation}
while
\begin{equation}\label{eqPnormaliz}
\|(\phi_n,\psi_n) \|_1=1,\ \ g_n^{-\frac{1}{4}}\|(F_n,H_n) \|_2\to 0.
\end{equation}

Observe that there exists $C>0$ such that
\[
gu_{ap}^2\geq 4 g^{\frac{1}{2}},\ \ r_0+Cg^{-\frac{1}{4}}\leq r\leq 1-Cg^{-\frac{1}{4}},
\]
for $g$ sufficiently large (keep in mind that $u_0'(1)<0$ from Hopf's boundary point lemma). Furthermore, it holds
\begin{equation}\label{eqPexpGG}gu_{ap}v_{ap}\leq Cg^\frac{1}{2}e^{-4g^\frac{1}{4}|r-r_0|},\ \ r\in [0,1].\end{equation}
Hence, a standard barrier argument yields that
\begin{equation}\label{eqPexp}\begin{split}
|\psi_n(x)|\leq &|\psi_n(r_0+Cg_n^{-\frac{1}{4}})|e^{-2g_n^\frac{1}{4}(r-r_0-Cg_n^{-\frac{1}{4}})} +o(g_n^{-\frac{1}{4}})e^{g_n^\frac{1}{4}(r_0-r)}\\ &+C\|\phi_n\|_{L^\infty(r_0,1)}e^{2g_n^\frac{1}{4}(r_0-r)}+|\psi_n(1-Cg_n^{-\frac{1}{4}})|e^{2g_n^\frac{1}{4}(r-1+Cg_n^{-\frac{1}{4}})}\end{split}
\end{equation}
for $r\in (r_0+Cg_n^{-\frac{1}{4}},1-Cg_n^{-\frac{1}{4}})$.
In particular, recalling (\ref{eqPnormaliz}), for any $\rho>0$, we have
\begin{equation}\label{eqPlog}
\left|\psi_n\left(1-\rho (\ln g_n)g_n^{-\frac{1}{4}}\right)\right|\leq Ce^{g_n^\frac{1}{4}(r_0-1)}
\left(g_n^{2\rho}e^{g_n^\frac{1}{4}(r_0-1)}+o(g_n^{\rho-\frac{1}{4}})+g_n^{-2\rho} \right),
\end{equation}
as $n\to \infty$.
Now, let us write
\begin{equation}\label{eqPwrite}
\psi_n(r)=\psi_n\left(1-\rho (\ln g_n)g_n^{-\frac{1}{4}}\right)R_n(r)+\hat{\psi}_n(r),\ \ r\in \left(1-\rho (\ln g_n)g_n^{-\frac{1}{4}},1\right),
\end{equation}where
\[R_n(r)=\left\{\begin{array}{ll}
    \frac{1-r^{2-N}}{1-\left(1-\rho (\ln g_n)g_n^{-\frac{1}{4}}\right)^{2-N}} & \textrm{if}\ N\geq 3, \\
      &   \\
    \frac{\ln r}{\ln \left(1-\rho (\ln g_n)g_n^{-\frac{1}{4}}\right)} & \textrm{if}\ N= 2.
  \end{array}\right.
\]
Then, from (\ref{eqPtest}), (\ref{eqPnormaliz}), (\ref{eqPexpGG}) and the fact that $u_{ap}\leq C(1-r)$ in $(1-d,1)$ for some small $d>0$ independent of $g$, it holds
\[
-\hat{\psi}_n''-\frac{N-1}{r}\hat{\psi}_n'+g_nu_{ap}^2\hat{\psi}_n=\hat{H}_n
\]
with
\[|\hat{H}_n|\leq Cg_n(1-r)^2
\left|\psi_n\left(1-\rho (\ln g_n)g_n^{-\frac{1}{4}}\right)\right|+Cg_n^{\frac{1}{2}+4\rho}e^{4g_n^{\frac{1}{4}}(r_0-1)}
+o\left(g_n^{\frac{1}{4}+\rho}\right)e^{g_n^{\frac{1}{4}}(r_0-1)}
\]
for $r\in \left(1-\rho (\ln g_n)g_n^{-\frac{1}{4}},1\right)$.
Moreover, we have
\[
\hat{\psi}_n\left(1-\rho (\ln g_n)g_n^{-\frac{1}{4}}\right)=\hat{\psi}_n\left(1\right)=0.
\]
Hence,  we deduce by Lemma \ref{lemBlowUP} that
\[\|\hat{\psi}_n\|_{L^\infty\left(1-\rho (\ln g_n)g_n^{-\frac{1}{4}},1\right)}\leq C
\left|\psi_n\left(1-\rho (\ln g_n)g_n^{-\frac{1}{4}}\right)\right|
+o\left(g_n^{-\frac{1}{4}+\rho}\right)e^{g_n^{\frac{1}{4}}(r_0-1)},
\]
as $n\to \infty$.
Consequently, via (\ref{eqPlog}) and (\ref{eqPwrite}), we get
\begin{equation}\label{eqPlogBig}\|{\psi}_n\|_{L^\infty\left(1-\rho (\ln g_n)g_n^{-\frac{1}{4}},1\right)}\leq
C e^{g_n^\frac{1}{4}(r_0-1)}
\left(g_n^{2\rho}e^{g_n^\frac{1}{4}(r_0-1)}+o(g_n^{\rho-\frac{1}{4}})+g_n^{-2\rho} \right),
\end{equation}
as $n\to \infty$.

In light of (\ref{eqPtest}), (\ref{eqPnormaliz}), (\ref{eqPexpGG}) together with the exponential decay estimates for $u_{ap},v_{ap}$ that led to it, standard elliptic estimates and the usual diagonal-compactness argument,
passing to a further subsequence if needed, we find that
\begin{equation}\label{eqPCloc1}
\phi_n\to \phi_\infty\ \ \textrm{in}\ \ C^1_{loc}(r_0,1],
\end{equation}
where $\phi_\infty$ satisfies
\[
-\phi_\infty''-\frac{N-1}{r}\phi_\infty'+f'(u_0)\phi_\infty=0\ \textrm{in}\ (r_0,1);\ \phi_\infty(1)=0.
\]
In other words, we have
\begin{equation}\label{eqPCloc2}
\phi_\infty = \lambda_+u_1,\ r\in [r_0,1]\  \textrm{for some}\ \lambda_+\in \mathbb{R}.
\end{equation}
Analogously, passing to a further subsequence if needed, it holds
\begin{equation}\label{eqPCloc3}
\psi_n\to \psi_\infty\equiv \lambda_- v_1 \ \ \textrm{in}\ \ C_{loc}[0,r_0)\cap C_{loc}^1(0,r_0)\  \textrm{for some}\ \lambda_-\in \mathbb{R}.
\end{equation}
We will next show that
\begin{equation}\label{eqPbalance}
\psi_\infty(r_0)+\phi_\infty(r_0)=0\ \textrm{and}\ \psi'_\infty(r_0)+\phi'_\infty(r_0)=0.
\end{equation}
To this end, we
need some preliminary observations.

Firstly, recalling that $t=\mu g^\frac{1}{4}(r-r_0-\xi)$, we note that the following estimates hold:
\begin{equation}\label{eqP1}
u_{ap}^2(r)=\mu^2g^{-\frac{1}{2}}U^2(t)+g^{-\frac{3}{4}}p_{22}(t)+\left\{\begin{array}{ll}
                                                                   \sum_{k=0}^4O(g^{-\frac{k}{4}})(r-r_0-\xi)^{4-k},& r\in [r_0,1], \\
                                                                   O(g^{-1})e^{-cg^\frac{1}{4}|r-r_0-\xi|},& r\in [0,r_0],
                                                                   \end{array}
 \right.
\end{equation}
\begin{equation}\label{eqP1}
v_{ap}^2(r)=\mu^2g^{-\frac{1}{2}}V^2(t)+g^{-\frac{3}{4}}p_{11}(t)+\left\{\begin{array}{ll}
                                                                   O(g^{-1})e^{-cg^\frac{1}{4}|r-r_0-\xi|},& r\in [r_0,1], \\
                                                                   \sum_{k=0}^4O(g^{-\frac{k}{4}})(r-r_0-\xi)^{4-k},& r\in [0,r_0],
                                                                   \end{array}
 \right.
\end{equation}

\begin{equation}\label{eqP2}
u_{ap}v_{ap}(r)=\mu^2g^{-\frac{1}{2}}UV(t)+g^{-\frac{3}{4}}p_{12}(t)
                                                                   +O(g^{-1})e^{-cg^\frac{1}{4}|r-r_0-\xi|},\ \ r\in [0,1],
                                                                   \end{equation}where the above functions $p_{ij}$ are independent of $g$ and satisfy
                                                                   \[|p_{22}(t)|\leq \left\{\begin{array}{ll}
                                                                                           C(t^3+1), & t\geq 0, \\
                                                                                             &   \\
                                                                                           Ce^{ct}, & t\leq 0,
                                                                                         \end{array}
                                                                   \right.
                                                                   \]
\[|p_{11}(t)|\leq C |p_{22}(-t)|,\ \ |p_{12}(t)|\leq Ce^{-c|t|},\ \ t\in \mathbb{R}.\]
We point out that the above estimates hold uniformly as $g\to +\infty$.

 To obtain information near $r_0$, we will employ
a blow-up argument. To this end, let
\begin{equation}\label{eqPhi}
{\Phi}(t)=\phi\left(r_0+\xi+\mu^{-1}g^{-\frac{1}{4}}t\right),\ \ {\Psi}(t)=\psi\left(r_0+\xi+\mu^{-1}g^{-\frac{1}{4}}t\right)
\end{equation}
for $\mu g^\frac{1}{4}(-r_0-\xi)\leq t \leq \mu g^\frac{1}{4}(1-r_0-\xi)$.
We find readily from (\ref{eqPtest}) that
\[
\begin{array}{c}
	-\Phi''-\frac{N-1}{r_0+\xi+\mu^{-1}g^{-\frac{1}{4}}t}\mu^{-1}g^{-\frac{1}{4}}\Phi'+\mu^{-2}g^{-\frac{1}{2}}f'(u_{ap})\Phi+\mu^{-2}g^{\frac{1}{2}}v_{ap}^2\Phi+2\mu^{-2}g^{\frac{1}{2}}u_{ap}v_{ap}\Psi=\mu^{-2}g^{-\frac{1}{2}}F,\\
	\\
	-\Psi''-\frac{N-1}{r_0+\xi+\mu^{-1}g^{-\frac{1}{4}}t}\mu^{-1}g^{-\frac{1}{4}}\Psi'+\mu^{-2}g^{-\frac{1}{2}}h'(v_{ap})\Psi+\mu^{-2}g^{\frac{1}{2}}u_{ap}^2\Psi+2\mu^{-2}g^{\frac{1}{2}}u_{ap}v_{ap}\Phi=\mu^{-2}g^{-\frac{1}{2}}H,
	\end{array}
\]
where $u_{ap},v_{ap},F,H$ are evaluated at $r_0+\xi+\mu^{-1}g^{-\frac{1}{4}}t$. By   (\ref{eqPnormaliz}), (\ref{eqP1})-(\ref{eqP2}), and the standard compactness-diagonal argument, passing to a further subsequence if needed (still denoted by $(\Phi_n,\Psi_n)$), we may assume that the pair $(\Phi_n,\Psi_n)$ converges in $C^1_{loc}(\mathbb{R})$ to a bounded element of the kernel of the linear operator $L$ in (\ref{eqL}) (the trivial limit not being excluded). By virtue of \cite[Prop 5.1]{berestycki1},
the only such elements in the kernel of $L$ are constant multiples of $(U',V')$. Consequently, we have
\begin{equation}\label{eqPlocIn}
(\Phi_n,\Psi_n)\to \sigma(U',V')\ \ \textrm{in}\ C^1_{loc}(\mathbb{R})\ \textrm{as}\ n\to \infty,\ \textrm{for some}\ \sigma \in \mathbb{R}.
\end{equation}

The next observation concerns the perturbed linear operator
\begin{equation}\label{eqPLg}
\texttt{L}_g=L+  \mu^{-2}g^{-\frac{1}{4}}\left(\begin{array}{cc}
                                                                              p_{11}(t) & p_{12}(t) \\
                                                                              p_{12}(t) & p_{22}(t)
                                                                            \end{array}
 \right)I,
\end{equation}
where $L$ is as in (\ref{eqL}). Since the pairs $(U',V')$ and $(tU'+U,tV'+V)$ belong in the kernel of $L$, it is natural to seek corresponding elements in the kernel of $\texttt{L}_g$ in the form
\begin{equation}\label{eqPdecompLg}
\left(\begin{array}{c}
        \Phi_1 \\
        \Psi_1
      \end{array}
 \right)
 =
\left(\begin{array}{c}
        U' \\
        V'
      \end{array}
 \right)+ \mu^{-2}g^{-\frac{1}{4}}\left(\begin{array}{c}
        \hat{\Phi}_1 \\
        \hat{\Psi}_1
      \end{array}
 \right)\ \textrm{and}\ \left(\begin{array}{c}
        \Phi_2 \\
        \Psi_2
      \end{array}
 \right)
 =
\left(\begin{array}{c}
        tU'+U \\
        tV'+V
      \end{array}
 \right)+ \mu^{-2}g^{-\frac{1}{4}}\left(\begin{array}{c}
        \hat{\Phi}_2 \\
        \hat{\Psi}_2
      \end{array}
 \right),
\end{equation}
respectively. It follows readily that the following inhomogeneous problems must be satisfied:
\[
L\left(\begin{array}{c}
        \hat{\Phi}_i\\
        \hat{\Psi}_i
      \end{array}
 \right)+  \mu^{-2}g^{-\frac{1}{4}}\left(\begin{array}{cc}
                                                                              p_{11}(t) & p_{12}(t) \\
                                                                              p_{12}(t) & p_{22}(t)
                                                                            \end{array}
 \right)\left(\begin{array}{c}
        \hat{\Phi}_i\\
        \hat{\Psi}_i
      \end{array}
 \right)=\left(\begin{array}{c}
        \hat{F}_i \\
        \hat{G}_i
      \end{array}
 \right),\ \ i=1,2,
\]
for some $\hat{F}_i,\hat{G}_i$ that are independent of $g$ and tend to zero  super-exponentially fast  as $|t|\to +\infty$.
For our purposes, however, it will be enough to make  the following choices:
\[
L\left(\begin{array}{c}
        \hat{\Phi}_i\\
        \hat{\Psi}_i
      \end{array}
 \right)=\left(\begin{array}{c}
        \hat{F}_i \\
        \hat{G}_i
      \end{array}
 \right),\ \ i=1,2.
\]
By Proposition \ref{proSymA}, the above problems  admit solutions
such that
\begin{equation}\label{eqPass1}
 \hat{\Phi}_i(t)-(a_i)_+-b_it\to 0,\ \hat{\Psi}_i(t)\to 0 \ \textrm{as}\ t\to +\infty,
\end{equation}
\begin{equation}\label{eqPass2}
\hat{\Phi}_i(t)\to 0 ,\ \hat{\Psi}_i(t)-(a_i)_--b_it\to 0 \ \textrm{as}\ t\to -\infty,
\end{equation}
for some $(a_i)_\pm,\ b_i\in \mathbb{R}$, $i=1,2$, super-exponentially fast (we insist on the fact that these solutions do not depend on $g$).
Moreover, analogous estimates hold for their derivatives. Taking into account the asymptotic behavior of $(U,V)$ and its derivatives,    the above choices of $(\hat{\Phi}_i,\hat{\Psi}_i)$ yield that the pairs in (\ref{eqPdecompLg}) satisfy
\begin{equation}\label{eqPremain}
\texttt{L}_g\left(\begin{array}{c}
                    \Phi_i \\

                    \Psi_i
                  \end{array}
 \right)=\mu^{-4} g^{-\frac{1}{2}}\left(\begin{array}{c}
                                          F_i \\
                                          G_i
                                        \end{array}
 \right),\ i=1,2,
\end{equation}
uniformly in $\mathbb{R}$, as $g\to +\infty$, for some fixed functions $F_i,G_i$ that tend to zero super-exponentially fast as $t\to \pm \infty$.

In the sequel, for notational simplicity, we will frequently drop the subscripts $n$. Let $\delta>0$ independent of $n$ and denote $T^\pm=\mu(\pm\delta-\xi) g^\frac{1}{4}$. Integrating by parts, we observe that
\begin{align*}
&  \int_{r_0-\delta}^{r_0+\delta} -\Delta \phi (r) \Phi_i (t) r^{N-1}dr\\
& = \mu^2 g^{1/2} \int_{r_0-\delta}^{r_0+\delta} -\Delta \Phi_i (t) \phi (r) r^{N-1}dr\\
&+\mu g^{1/4}((r_0+\delta)^{N-1} \phi (r_0 +\delta ) \Phi_i^\prime (T_+) - (r_0 -\delta)^{N-1}\phi (r_0 -\delta) \Phi_i^\prime (T_-) )\\
& - ((r_0+\delta)^{N-1} \phi^\prime (r_0 +\delta ) \Phi_i (T^+) - (r_0-\delta)^{N-1} \phi^\prime (r_0 -\delta ) \Phi_i (T^-) ).
\end{align*}
So multiplying (\ref{eqPtest}) by $\left(r^{N-1}\Phi_i\left(t\right),r^{N-1}\Psi_i\left(t\right)\right)$, $i=1,2$, rearranging terms and using the previous identity, we get:
\begin{equation}\label{eqPBig}\begin{array}{c}\mu^2 g^{\frac{1}{2}}\left<r^{N-1}\texttt{L}_g\left(\begin{array}{c}
                    \Phi_i(t) \\

                    \Psi_i(t)
                  \end{array}
 \right), \left(\begin{array}{c}
                    \phi(r) \\

                    \psi(r)
                  \end{array}
 \right) \right>_{L^2\times L^2(r_0-\delta,r_0+\delta)}\\ \\-(N-1)\mu g^\frac{1}{4}\int_{r_0-\delta}^{r_0+\delta}\left(\Phi_i'(t)\phi(r)+\Psi_i'(t)\psi(r) \right)r^{N-2}dr\\ \\
    -(r_0+\delta)^{N-1}\phi'(r_0+\delta)\Phi_i\left(T^+\right)+(r_0-\delta)^{N-1}\phi'(r_0-\delta)\Phi_i\left(T^-\right)
\\ \\+(r_0+\delta)^{N-1}\phi(r_0+\delta)\mu g^\frac{1}{4}\Phi_i'(T^+)-(r_0-\delta)^{N-1}\phi(r_0-\delta)\mu g^\frac{1}{4}\Phi_i'(T^-) \\
      \\
    -(r_0+\delta)^{N-1}\psi'(r_0+\delta)\Psi_i(T^+)+(r_0-\delta)^{N-1}\psi'(r_0-\delta)\Psi_i(T^-)\\ \\
+(r_0+\delta)^{N-1}\psi(r_0+\delta)\mu g^\frac{1}{4}\Psi_i'(T^+)-(r_0-\delta)^{N-1}\psi(r_0-\delta)\mu g^\frac{1}{4}\Psi_i'(T^-) \\
      \\
    +2\int_{r_0-\delta}^{r_0+\delta}\left[gu_{ap}v_{ap}(r)-\mu^2g^{\frac{1}{2}}UV(t)-g^\frac{1}{4}p_{12}(t)\right]\left[\psi(r) \Phi_i(t)+\phi(r) \Psi_i(t)\right]r^{N-1}dr \\
      \\
    +\int_{r_0-\delta}^{r_0+\delta}\left[g v_{ap}^2(r)-\mu^2g^{\frac{1}{2}}V^2(t)-g^\frac{1}{4}p_{11}(t)\right]
\phi(r) \Phi_i(t)r^{N-1}dr \\
      \\
    +\int_{r_0-\delta}^{r_0+\delta}\left[gu_{ap}^2(r)-\mu^2g^{\frac{1}{2}}U^2(t)-g^\frac{1}{4}p_{22}(t)\right]
\psi(r) \Psi_i(t)r^{N-1}dr \\
      \\
    +\int_{r_0-\delta}^{r_0+\delta}\left[f'(u_{ap})\phi(r)\Phi_i(t)+h'(v_{ap})
\psi(r)\Psi_i(t)\right]r^{N-1}dr \\
      \\
    = \\
      \\
   \int_{r_0-\delta}^{r_0+\delta}\left[F(r)\Phi_i(t)+H(r)
\Psi_i(t)\right]r^{N-1}dr.
  \end{array}
\end{equation}

The first term in the above relation, thanks to (\ref{eqPnormaliz}) and (\ref{eqPremain}), can be plainly estimated as follows:
\[
\mu^2 g^{\frac{1}{2}}\left<r^{N-1}\texttt{L}_g\left(\begin{array}{c}
                    \Phi_i(t) \\

                    \Psi_i(t)
                  \end{array}
 \right), \left(\begin{array}{c}
                    \phi(r) \\

                    \psi(r)
                  \end{array}
 \right) \right>_{L^2\times L^2(r_0-\delta,r_0+\delta)}=O(\delta),\ \ i=1,2,
\]
uniformly in $n\geq 1$, as $\delta \to 0$.

Concerning the second line in (\ref{eqPBig}), we will treat the cases $i=1$ and $i=2$ separately. We first note from (\ref{eqPnormaliz}), (\ref{eqPdecompLg}), (\ref{eqPass1}) and (\ref{eqPass2}) that
\[
\mu g^\frac{1}{4}\int_{r_0-\delta}^{r_0+\delta}\left(\Phi_1'(t)\phi(r)+\Psi_1'(t)\psi(r) \right)r^{N-2}dr=\mu g^\frac{1}{4}\int_{r_0-\delta}^{r_0+\delta}\left(U''(t)\phi(r)+V''(t)\psi(r) \right)r^{N-2}dr+O(\delta),
\]
uniformly in $n\geq 1$, as $\delta \to 0$.
By (\ref{eqPhi}), (\ref{eqPlocIn}), the exponential decay of $U'',V''$ as $t \to \pm \infty$ and Lebesgue's dominated convergence theorem, we infer that   the righthand side of the above relation is equal to
\[\begin{split}
\int_{\mu(-\delta-\xi)g^\frac{1}{4}}^{\mu(\delta-\xi)g^\frac{1}{4}}\left(U''(t)\Phi(t)+V''(t)\Psi(t) \right)(r_0+\xi+\mu^{-1}g^{-\frac{1}{4}}t)^{N-2}dt+O(\delta)=
\ \ \ \ \ \ \ \ \ \ \ \ \ \ \ \ \ \ \ \ \ \ \ \ \ \ \ \
\\
\ \ \ \ \ \ \ \ \ \ \ \ \ \ \ \ \ \ \ \ \ \ \ \ \int_{-\infty}^{+\infty}\sigma\left(U''(t)U'(t)+V''(t)V'(t) \right)r_0^{N-2}dt+o(1)+O(\delta)\stackrel{(\ref{eqRefUV})}{=}o(1)+O(\delta),&
\end{split}
\]
as $n\to +\infty$ for each $\delta <1$.
Hence, we have that
\[
\mu g^\frac{1}{4}\int_{r_0-\delta}^{r_0+\delta}\left(\Phi_1'(t)\phi(r)+\Psi_1'(t)\psi(r) \right)r^{N-2}dr=
o(1)+O(\delta)
\]
as $n\to +\infty$ for each $\delta <1$.
 On the other hand, it turns out that for $i=2$ the following rough estimate will be sufficient:
\[
\mu g^\frac{1}{4}\int_{r_0-\delta}^{r_0+\delta}\left(\Phi_2'(t)\phi(r)+\Psi_2'(t)\psi(r) \right)r^{N-2}dr=  g^\frac{1}{4}O(\delta),
\]
uniformly in $n$, as $\delta \to 0$.

We will next show that the terms in the   seventh, eighth and ninth line of (\ref{eqPBig}) tend to zero as $n\to \infty$.
Indeed, by (\ref{eqPnormaliz}), (\ref{eqP1}), (\ref{eqPdecompLg}), (\ref{eqPass1}) and (\ref{eqPass2}), we get
\[
 \begin{split}
    &\left|\int_{r_0}^{r_0+\delta}\left[gu_{ap}^2(r)-\mu^2g^{\frac{1}{2}}U^2(t)-g^\frac{1}{4}p_{22}(t)\right]
\psi(r) \Psi_i(t)r^{N-1}dr\right| \\
    \leq & Cg\|(\phi,\psi)\|_1 \sum_{k=0}^4 g^{-\frac{k}{4}}\int_{r_0}^{r_0+\delta}|r-r_0-\xi|^{4-k}e^{-cg^\frac{1}{4}|r-r_0-\xi|}dr\\
    \leq & C \sum_{k=0}^4 \int_{0}^{\delta}|g^\frac{1}{4}(s-\xi)|^{4-k}e^{-cg^\frac{1}{4}|s-\xi|}ds\\
    \leq& C g^{-\frac{1}{4}},
 \end{split}
\]
where, here and in the sequel, the generic constants $c,C$ are independent of both $n$ and $\delta$. The remaining terms can be handled analogously.

The term in the tenth line of (\ref{eqPBig}) can be estimated in a simple way as follows. Using (\ref{eqPnormaliz}) and (\ref{eqPdecompLg}), we obtain that
\[
\int_{r_0-\delta}^{r_0+\delta}\left[f'\left(u_{ap}\right)\phi(r)\Phi_i(t)+h'\left(v_{ap}\right)
\psi(r)\Psi_i(t)\right]r^{N-1}dr=
O(\delta)g^{\frac{i-1}{4}},\ \ i=1,2,
\]
uniformly in $n\geq 1$, as $\delta \to 0$.

Concerning the last  line of (\ref{eqPBig}), via (\ref{eqPnormaliz}), (\ref{eqPdecompLg}), (\ref{eqPass1}) and (\ref{eqPass2}), we get
\[
\begin{split}
\int_{r_0-\delta}^{r_0+\delta}\left[F(r)\Phi_i(t)+H(r)
\Psi_i(t)\right]r^{N-1}dr=  \ \ \ \ \ \ \ \ \ \ \ \ \ \ \ \ \ \ \ \ \ \ \ \ \ \ \ \ \ \ \ \ \ \ \ \ \ \ \ \ \ \ \ \ \ \ \ \ \ \ \ \ \ \ \ \ \ \ \ & \\ o(1)g^\frac{1}{4}g^\frac{i-1}{4}\int_{-\delta}^{\delta}\frac{1}{1+|g^\frac{1}{4}s|^{1+\gamma}}ds=o(1)g^\frac{i-1}{4}\int_{-\delta g^\frac{1}{4}}^{\delta g^\frac{1}{4}}\frac{1}{1+|\tau|^{1+\gamma}}d\tau=o(1)g^\frac{i-1}{4} \end{split} \]
as $n\to \infty$, $i=1,2$. We point out that the assumption $\gamma>0$ was used crucially in the last equality.

Armed with the above information, recalling  (\ref{eqPCloc1}), (\ref{eqPCloc3}), (\ref{eqPdecompLg}), and the definition of $T^\pm $, passing to a further subsequence if needed, we can let $n\to \infty$ in (\ref{eqPBig}) for $i=1,2$ to arrive, respectively,  at the following two relations:
 \[
     -\phi_\infty'(r_0+\delta)-\psi_\infty'(r_0-\delta) =O(\delta),
 \]
 \[
    - \delta\phi'_\infty(r_0+\delta)
+\phi_\infty(r_0+\delta) +\delta\psi_\infty'(r_0-\delta)+\psi_\infty(r_0-\delta)=O(\delta),
 \]
  as  $\delta \to 0$.
 The desired equalities in (\ref{eqPbalance}) now follow at once by letting $\delta \to 0$ in the above two relations.
Then, in light of  (\ref{eqPCloc2}) and (\ref{eqPCloc3}) and the nondegeneracy assumption  $u_1'(r_0)\neq v_1'(r_0)$ (recall (\ref{eqNondegen})), we get that $\lambda_\pm=0$. Consequently, recalling also (\ref{eqPexp}), we have shown so far that
 \begin{equation}\label{eqPmind}
 \phi_n,\ \psi_n \to 0\ \ \textrm{in}\ C_{loc}\left([0,1]\setminus \{r_0\} \right)\cap C^1_{loc}\left((0,1]\setminus \{r_0\} \right).
 \end{equation}

 We will next extend as much as possible towards $r_0$ the domain of validity of the above relation. Recalling (\ref{eqPnormaliz}), (\ref{eqPexpGG}) and noting that the same exponential decay estimate also holds for $gv_{ap}^2$ in $(r_0,1)$, we find from the first equation of (\ref{eqPtest}) that
 \[
(r^{N-1} \phi')'=r^{N-1}f'(u_{ap})\phi+O(1)g^\frac{1}{2}e^{-cg^\frac{1}{4}(r-r_0)}+o(1)\frac{g^\frac{1}{4}}{1+|g^\frac{1}{4}(r-r_0)|^{1+\gamma}},\ \ r\in (r_0,1),\ \textrm{as}\ n\to \infty.
 \]
 Integrating over $(r,1)$ with $r\in (r_0,1)$,   using that $\phi'(1)=o(1)$ and $\|\phi\|_{L^1(0,1)}=o(1)$ as $n\to \infty$ (from (\ref{eqPnormaliz}), (\ref{eqPmind})), yields
\[\begin{array}{rcl}
    \phi'(r) & = & o(1)+O(1)g^\frac{1}{4}e^{-cg^\frac{1}{4}(r-r_0)}+o(1)\int_{g^\frac{1}{4}(r-r_0)}^{g^\frac{1}{4}(1-r_0)}\frac{1}{1+\tau^{1+\gamma}}d\tau \\
      &   &   \\
      &=   & o(1)+O(1)g^\frac{1}{4}e^{-cg^\frac{1}{4}(r-r_0)},
  \end{array}
   \]
   uniformly in $r\in (r_0,1)$, as $n\to \infty$.
  Similarly, integrating once more gives
   \begin{equation}\label{eqPDecayOut}
   \phi(r) = o(1)+O(1)e^{-cg^\frac{1}{4}(r-r_0)}, \ \ \textrm{uniformly in}\  r\in (r_0,1),\ \textrm{as}\ n\to \infty.
   \end{equation}
  Clearly, there is an analogous relation for $\psi$ in $(d,r_0)$ for some small fixed $d>0$.

Let $M>0$ be independent of $n$.
By putting together the information supplied by (\ref{eqPDecayOut}) for $x=r_0+\xi+\mu^{-1}g^{-\frac{1}{4}}M$ with that from the above relation for $t=M$, via (\ref{eqPhi}), we deduce that
\[
o(1)+O(1)e^{-cM}=\sigma U'(M)+o(1)\ \ \textrm{as}\ n\to \infty,
\]
where the constant $c>0$ is   independent of $M$ as well. Sending first $n\to \infty$ in the above relation, and subsequently $M\to +\infty$ in the resulting one,
we get that $\sigma=0$ (recall that $U'(+\infty)=\psi_0>0$). So, in light of this, relation (\ref{eqPDecayOut}), the analogous one for $\psi_n$, and (\ref{eqPlocIn}) give that
\[
(\phi_n,\psi_n)\to (0,0),\ \textrm{uniformly on}\ [0,1],\ \textrm{as}\ n\to \infty.
\]

Finally, taking into account (\ref{eqPexp}), (\ref{eqPlogBig}), say with $\rho=\frac{1}{4}$,   and the above relation, we arrive at
\[
\|(\phi_n,\psi_n)\|_1\to 0 \ \ \textrm{as}\ n\to \infty,
\]
which contradicts (\ref{eqPnormaliz}).
\end{proof}
\begin{rmq}\label{remP}
The exact same proof of Proposition \ref{proLinear}, without the paragraph leading to (\ref{eqPlogBig}), also yields the still useful estimate
\[
\| (\phi,\psi)\|_{L^\infty \times L^\infty(0,1)}\leq Cg^{-\frac{1}{4}}\|(F,H)\|_0,
\] 	
where the latter norm is defined through
 (\ref{eqPnorm0}) with $i=0$
 and
 \[
w_0(s)=\left\{\begin{array}{ll}
1+|g^\frac{1}{4}s|^{1+\gamma},& s\geq 0,\\
\\
1,&s<0.
\end{array}\right.
\]
Then, analogously to (\ref{eqPexp}), we obtain that
\begin{equation}\label{eqPremark}
|\psi(r)|\leq  Cg^{-\frac{1}{4}}\|(F,H)\|_0e^{2g^\frac{1}{4}(r_0-r)} +Cg^{-\frac{1}{2}}\|(F,H)\|_2e^{g^\frac{1}{4}(r_0-r)}+|\psi(1-Cg^{-\frac{1}{4}})|e^{2g^\frac{1}{4}(r-1+Cg^{-\frac{1}{4}})}
\end{equation}
for \[r\in I=(r_0+Cg^{-\frac{1}{4}},1-Cg^{-\frac{1}{4}}).\]
In turn,  in analogy to (\ref{eqPlog}), it holds
\begin{equation}\label{eqPvia}\begin{split}\left|\psi\left(1-\rho (\ln g)g^{-\frac{1}{4}}\right)\right|\leq &  Cg^{2\rho-\frac{1}{4}}e^{2g^\frac{1}{4}(r_0-1)}\|(F,H)\|_0+Cg^{\rho-\frac{1}{2}}e^{g^\frac{1}{4}(r_0-1)}\|(F,H)\|_2\\&+Cg^{-2\rho}e^{g^\frac{1}{4}(r_0-1)}\max_{r\in I}\left|e^{|g^\frac{1}{4}(r-r_0)|}\psi(r) \right| .
\end{split}
\end{equation}
The corresponding $\hat{H}$ now satisfies
\[|\hat{H}|\leq Cg(1-r)^2
\left|\psi\left(1-\rho (\ln g)g^{-\frac{1}{4}}\right)\right|+Cg^{\frac{1}{4}+4\rho}e^{4g^{\frac{1}{4}}(r_0-1)}\|(F,H)\|_0
+Cg^{\rho}e^{g^\frac{1}{4}(r_0-1)}\|(F,H)\|_2
\]
for $r\in \left(1-\rho (\ln g)g^{-\frac{1}{4}},1\right)$.
Hence, by Lemma \ref{lemBlowUP}, we infer that the corresponding auxiliary function $\hat{\psi}$ satisfies
\[\begin{split}\|\hat{\psi}\|_{L^\infty\left(1-\rho (\ln g)g^{-\frac{1}{4}},1\right)}\leq & C
\left|\psi\left(1-\rho (\ln g)g^{-\frac{1}{4}}\right)\right|
+Cg^{-\frac{1}{4}+4\rho}e^{4g^{\frac{1}{4}}(r_0-1)}\|(F,H)\|_0\\
&+Cg^{\rho-\frac{1}{2}}e^{g^\frac{1}{4}(r_0-1)}\|(F,H)\|_2.\end{split}
\]
So, via  (\ref{eqPvia}), and recalling the definition of $\hat{\psi}$, we find that
\begin{equation}\label{eqPremThanks}\begin{split}\|{\psi}\|_{L^\infty\left(1-\rho (\ln g)g^{-\frac{1}{4}},1\right)}\leq &
Cg^{2\rho-\frac{1}{4}}e^{2g^\frac{1}{4}(r_0-1)}\|(F,H)\|_0+Cg^{\rho-\frac{1}{2}}e^{g^\frac{1}{4}(r_0-1)}\|(F,H)\|_2\\&+Cg^{-2\rho}e^{g^\frac{1}{4}(r_0-1)}\max_{r\in I}\left|e^{g^\frac{1}{4}(r-r_0)}\psi(r) \right|.
\end{split}\end{equation}
Thus, by using this in (\ref{eqPremark}), we infer that
\[
\max_{r\in I}\left|e^{g^{\frac{1}{4}}(r-r_0)}\psi(r) \right| \leq Cg^{-\frac{1}{4}}\|(F,H)\|_0+Cg^{\rho-\frac{1}{2}}\|(F,H)\|_2
\]
Consequently, using (\ref{eqPremThanks}) once more to complete the estimate up to the boundary point $r=1$,   we arrive at the following estimate:
\emph{For any $\rho>0$, there exists a constant $C>0$ such that
\begin{equation}\label{eqPestimSecond}
\|(\phi,\psi) \|_1 \leq Cg^{-\frac{1}{4}}\|(F,H)\|_0+Cg^{\rho-\frac{1}{2}}\|(F,H)\|_2,
\end{equation}
provided that $g$ is sufficiently large.
}
\end{rmq}\section{The perturbation argument: Existence of a genuine solution}\label{secnonlinear}
 We seek a solution of  system (\ref{eq2comp}) as
\begin{equation}\label{eqGenForm}
(u,v)=(u_{ap},v_{ap})+(\varphi,\psi)
\end{equation}
with
\begin{equation}\label{eqDiri}
\varphi'(0)=\psi'(0)=\varphi(1)=\psi(1)=0.
\end{equation}

After rearranging terms, we find
that $(\varphi,\psi)$ has to satisfy
\begin{equation}\label{eqFixed}\left\{\begin{array}{c}
\mathcal{L}(\varphi,\psi)=-\mathcal{R}-N(\varphi,\psi),
 \\
   \\
\varphi'(0)=\psi'(0)=\varphi(1)=\psi(1)=0.\end{array}\right.
\end{equation}
where
$
\mathcal{L}$ is as in Proposition \ref{proLinear}, $\mathcal{R}$ is as in (\ref{eqRemainder}), and
\begin{equation}\label{eqN}
   N(\varphi,\psi)=
\left( \begin{array}{c}
  f(u_{ap}+\varphi)-f(u_{ap})-f'(u_{ap})\varphi+g u_{ap} \psi^2+ g \psi^2\varphi+2g v_{ap}\varphi\psi \\
    \\
h(v_{ap}+\psi)-h(v_{ap})-h'(v_{ap})\psi+g v_{ap} \varphi^2+ g \varphi^2\psi+2g u_{ap}\varphi\psi\\
\end{array}
\right).
\end{equation}

The main effort in this section will be placed in showing the following proposition.
\begin{prop}\label{proPerturb}
 Given $\gamma\in (0,1)$,
 there exists $M>1$ such that the problem (\ref{eqFixed}) admits a unique solution $(\varphi,\psi)\in \left[C(\overline{B_1})\cap C^2({B_1})\right]^2$ in the set
\[
\mathcal{B}_{M,g}=\left\{(\varphi,\psi)\in C[0,1]\times C[0,1]\ : \ \|(\varphi,\psi)\|_1\leq M |\ln g|^{5+\gamma} g^{-\frac{3}{4}} \right\},
\]
where the above norm is as in (\ref{eqPnorm0}), provided that $g$ is  sufficiently large.
\end{prop}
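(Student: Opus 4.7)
The plan is to apply Banach's contraction principle to the fixed-point formulation of (\ref{eqFixed}). Proposition \ref{proLinear}, together with a routine Fredholm argument (the injectivity that follows from the a-priori bound, combined with the fact that $\mathcal{L}$ is a compact perturbation of the invertible Dirichlet Laplacian acting on each component), yields that $\mathcal{L}$ is a bijection with $\|\mathcal{L}^{-1}(F,H)\|_1\leq Cg^{-1/4}\|(F,H)\|_2$. Setting $T(\varphi,\psi):=\mathcal{L}^{-1}\bigl(-\mathcal{R}-N(\varphi,\psi)\bigr)$, the problem (\ref{eqFixed}) becomes $(\varphi,\psi)=T(\varphi,\psi)$, and I look for a fixed point in $\mathcal{B}_{M,g}$ with $M$ a large constant to be fixed at the end.

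The source term is handled immediately: by Proposition \ref{proRemainderGlobal}, $\mathcal{R}$ is supported in the matching zone $|r-r_0|\leq 2|\ln g|g^{-1/4}$, where the polynomial part of $w_2$ (in the natural direction) is bounded by $C|\ln g|^{1+\gamma}$, while in the opposite direction the exponential growth of $w_2$ is dominated by the exponential decay already built into $\mathcal{R}$. A direct computation gives $\|\mathcal{R}\|_2\leq C|\ln g|^{5+\gamma}g^{-1/2}$, so that $\|\mathcal{L}^{-1}(-\mathcal{R})\|_1\leq C|\ln g|^{5+\gamma}g^{-3/4}\leq \tfrac{1}{2}M|\ln g|^{5+\gamma}g^{-3/4}$ once $M$ has been fixed large enough.

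The heart of the proof is the bound on $N(\varphi,\psi)$ for $(\varphi,\psi)\in \mathcal{B}_{M,g}$. The critical terms are the bilinear ones $gu_{ap}\psi^2$, $2gv_{ap}\varphi\psi$ and $gv_{ap}\varphi^2$ appearing in (\ref{eqN}), each carrying an explicit factor $g$ that must be absorbed. To do so I combine two structural facts. First, the weight $w_1$ forces $|\psi(r)|\leq M|\ln g|^{5+\gamma}g^{-3/4}$ for $r\leq r_0$ and $|\psi(r)|\leq M|\ln g|^{5+\gamma}g^{-3/4}e^{-g^{1/4}(r-r_0)}$ for $r\geq r_0$, with the mirror statement for $\varphi$; so each of $\varphi,\psi$ decays exponentially after crossing the interface. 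Second, the inner construction together with Proposition \ref{proDiffer} yields $u_{ap}(r)\leq C\bigl(g^{-1/4}+(r-r_0)\bigr)$ near $r_0$ for $r>r_0$ and $u_{ap}(r)\leq Cg^{-1/4}e^{-c'g^{1/4}(r_0-r)}$ for $r<r_0$ with $c'$ arbitrarily large, and symmetrically for $v_{ap}$. Substituting these into $gu_{ap}\psi^2$, rescaling via $t=g^{1/4}(r-r_0)$ (so that $g\bigl(g^{-1/4}+(r-r_0)\bigr)e^{-2g^{1/4}(r-r_0)}=g^{-3/4}(1+t)e^{-2t}$), and choosing $c'>1$ so that the exponential growth of $w_2$ on $r<r_0$ is beaten by the super-fast decay of $u_{ap}$ there, produces $\|gu_{ap}\psi^2\|_2\leq CM^2|\ln g|^{11+2\gamma}g^{-3/4}$; the other bilinear terms are handled identically. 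The cubic terms $g\psi^2\varphi$, $g\varphi^2\psi$ and the Taylor remainders $f(u_{ap}+\varphi)-f(u_{ap})-f'(u_{ap})\varphi$, $h(v_{ap}+\psi)-h(v_{ap})-h'(v_{ap})\psi$ contribute strictly lower-order terms. Altogether,
$$\|\mathcal{L}^{-1}N(\varphi,\psi)\|_1\leq Cg^{-1/4}\cdot CM^2|\ln g|^{11+2\gamma}g^{-3/4}=\bigl(CM|\ln g|^{6+\gamma}g^{-1/4}\bigr)\cdot M|\ln g|^{5+\gamma}g^{-3/4}\leq \tfrac{1}{2}M|\ln g|^{5+\gamma}g^{-3/4}$$
for $g$ sufficiently large, which combined with the bound on the source term proves $T(\mathcal{B}_{M,g})\subset \mathcal{B}_{M,g}$.

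For the contraction property, I write $N(\varphi_1,\psi_1)-N(\varphi_2,\psi_2)$ via the mean value formula and repeat the above weighted estimates, each time peeling off one factor of $(\varphi,\psi)$ of size $M|\ln g|^{5+\gamma}g^{-3/4}$ and replacing it by the difference $(\varphi_1-\varphi_2,\psi_1-\psi_2)$. This produces $\|N(\varphi_1,\psi_1)-N(\varphi_2,\psi_2)\|_2\leq CM|\ln g|^{6+\gamma}\|(\varphi_1-\varphi_2,\psi_1-\psi_2)\|_1$, so that $T$ is a contraction with constant $CM|\ln g|^{6+\gamma}g^{-1/4}\leq \tfrac{1}{2}$ for $g$ large. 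The unique fixed point supplied by Banach's theorem gives the claimed solution, and standard elliptic regularity promotes it to a classical one. The main obstacle throughout is precisely the weighted control of the bilinear cross terms: using the naive bound $u_{ap},v_{ap}\leq C$ would leave the factor $g$ unabsorbed and yield a remainder of order $g^{-3/4}$ that lies exactly on the boundary of $\mathcal{B}_{M,g}$ rather than strictly inside; it is the sharp size $u_{ap},v_{ap}\sim g^{-1/4}$ in the sub-zone where $\psi$, respectively $\varphi$, fails to decay, combined with the super-exponential drop on the other side of the interface, that supplies the extra $g^{-1/4}$ needed to close the estimates.
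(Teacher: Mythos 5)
There is a genuine gap at the point where you assert that, after treating $gu_{ap}\psi^2$, ``the other bilinear terms are handled identically.'' They are not, and the exception is exactly the cross terms $2gv_{ap}\varphi\psi$ (first equation) and $2gu_{ap}\varphi\psi$ (second equation). Your mechanism for $gu_{ap}\psi^2$ works because, on the side $r<r_0$ where the weight $w_2(r-r_0)=e^{g^{1/4}|r-r_0|}$ grows, the \emph{coefficient} $u_{ap}$ itself is $O(g^{-1/4})e^{-cg^{1/4}|r-r_0|}$, so it both beats the weight and supplies the crucial factor $g^{-1/4}$. For $2gv_{ap}\varphi\psi$ the situation on $r<r_0$ is different: there $v_{ap}\approx v_0$ is of order one (that region is its support, so there is no ``super-exponential drop'' and no $g^{-1/4}$ smallness away from $r_0$), $\psi$ carries the trivial weight and contributes no decay, and the exponential decay of $\varphi$ is exactly cancelled by the exponential growth of $w_2$. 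The best one can extract is $\|(gv_{ap}\varphi\psi,0)\|_2\leq Cg\|(\varphi,\psi)\|_1^2$, with the full factor $g$ surviving; applying the inverse with gain $g^{-1/4}$ then yields $CM^2|\ln g|^{10+2\gamma}g^{-3/4}$, which exceeds the radius $M|\ln g|^{5+\gamma}g^{-3/4}$ of $\mathcal{B}_{M,g}$ by the divergent factor $CM|\ln g|^{5+\gamma}$. So the self-mapping (and likewise the contraction) estimate does not close with the tools you invoke.

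The paper flags precisely this difficulty and resolves it with an additional device that your proposal omits: the auxiliary norm $\|\cdot\|_0$ (with trivial weight on the non-decaying side) introduced in Remark \ref{remP}, together with the refined a-priori estimate (\ref{eqPestimSecond}), namely $\|(\phi,\psi)\|_1\leq Cg^{-1/4}\|(F,H)\|_0+Cg^{\rho-1/2}\|(F,H)\|_2$ for arbitrarily small $\rho>0$. One checks that $\|(gv_{ap}\varphi\psi,0)\|_0\leq Cg^{3/4}\|(\varphi,\psi)\|_1^2$ (the weight $w_0$ no longer grows exponentially on $r<r_0$, so the $O(1)$ size of $v_{ap}$ there is harmless), and then (\ref{eqPestimSecond}) gives $\|(\bar\varphi_2,\bar\psi_2)\|_1\leq CM^2|\ln g|^{10+2\gamma}g^{\rho-1}$, which is absorbed. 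Your bound on $\mathcal{R}$ and your treatment of $gu_{ap}\psi^2$, $gv_{ap}\varphi^2$, the cubic terms and the Taylor remainders are in line with the paper's, but without the extra norm and the second linear estimate (or an equivalent substitute) the argument for the cross terms fails, and with it the fixed-point scheme.
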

\begin{proof}
Let us define a mapping
\[T\ :\mathcal{B}_{M,g}\to\left[C(\overline{B_1})\cap C^2({B_1})\right]^2\]
by \[T(\varphi,\psi)\to(\bar{\varphi},\bar{\psi}),\]
where $(\bar{\varphi},\bar{\psi})$ is the unique solution of
\begin{equation}\label{eqFixed00}\left\{\begin{array}{c}
\mathcal{L}(\bar{\varphi},\bar{\psi})=-\mathcal{R}-N(\varphi,\psi),
 \\
   \\
\bar{\varphi}'(0)=\bar{\psi}'(0)=\bar{\varphi}(1)=\bar{\psi}(1)=0.\end{array}\right.
\end{equation} We point out that this mapping is well defined, for large $g$, thanks to Proposition \ref{proLinear}.

We claim that if $M>0$ is chosen sufficiently large, then $T$ maps $\mathcal{B}_{M,g}$ into itself, provided that $g$ is sufficiently large.
To this end, it is convenient to decompose $(\bar{\varphi},\bar{\psi})$ as the sum of a finite number of terms in the natural way.
The main term in this sum turns out to be $(\bar{\varphi}_0,\bar{\psi}_0)$ given by
\begin{equation}\label{eqFixed0}\left\{\begin{array}{c}
\mathcal{L}(\bar{\varphi}_0,\bar{\psi}_0)=-\mathcal{R},
 \\
   \\
\bar{\varphi}'_0(0)=\bar{\psi}'_0(0)=\bar{\varphi}_0(1)=\bar{\psi}_0(1)=0.\end{array}\right.
\end{equation}
By Propositions \ref{proRemainderGlobal} and \ref{proLinear}, we deduce that
\begin{equation}\label{eqN-1}
\|(\bar{\varphi}_0,\bar{\psi}_0)\|_1\leq Cg^{-\frac{1}{4}}\|\mathcal{R}\|_2\leq C|\ln g|^{5+\gamma}g^{-\frac{3}{4}}.
\end{equation}
The rest of the terms in the aforementioned sum come from neglecting $\mathcal{R}$ in (\ref{eqFixed}). Let us estimate some of the corresponding representative terms that come from the first line of $N$ in (\ref{eqN}).


For example let us consider $(\bar{\varphi}_1,\bar{\psi}_1)$ given by
\[
\mathcal{L}\left( \begin{array}{c}
   \bar{\varphi}_1 \\
    \bar{\psi}_1
\end{array}
\right)
=\left( \begin{array}{c}
  g \psi^2\varphi \\
    0
\end{array}
\right),
\]coupled with the boundary conditions (\ref{eqDiri}),
with $(\varphi,\psi)\in \mathcal{B}_{M,g}$. By Proposition \ref{proLinear},
we deduce that
\begin{equation}\label{eqN1}
\|(\bar{\varphi}_1,\bar{\psi}_1)\|_1\leq C g^\frac{3}{4} \|( \psi^2\varphi,0)\|_2.
\end{equation}
Then, to estimate the righthand side, we take advantage of the cancelation properties. For $r\geq r_0$, we find that
\[
\left(1+|g^{\frac{1}{4}}(r-r_0)|^{1+\gamma} \right)\psi^2|\varphi|\leq \left(1+|g^{\frac{1}{4}}(r-r_0)|^{1+\gamma} \right)e^{-2g^\frac{1}{4}(r-r_0)}\|(\varphi,\psi)\|_1^2|\varphi|
\leq C\|(\varphi,\psi)\|_1^3.
\]
On the other side, for $r\leq r_0$ we have
\[
e^{g^\frac{1}{4}|r-r_0|}\psi^2|\varphi|\leq \|(\varphi,\psi)\|_1 \psi^2 \leq C\|(\varphi,\psi)\|_1^3.
\]By the above two relations, we obtain that
\[
\|(\varphi\psi^2,0)\|_2\leq C\|(\varphi,\psi)\|_1^3.
\]
Hence, by (\ref{eqN1}) and the above relation, recalling that $(\varphi,\psi)\in \mathcal{B}_{M,g}$, we infer that
\begin{equation}\label{eqN0}\|(\bar{\varphi}_1,\bar{\psi}_1)\|_1\leq CM^3 |\ln g|^{15+3\gamma}g^{-\frac{3}{2}},
\end{equation}
with the constant $C$ being independent of both $M$ and $g$.
We can estimate analogously the other coupled terms from (\ref{eqN}), using that
\[|u_{ap}|\leq C g^{-\frac{1}{4}}e^{2g^\frac{1}{4}(r-r_0)}\ \textrm{if}\ r\leq r_0,\
|u_{ap}|\leq C (g^{-\frac{1}{4}}+r-r_0)\ \textrm{if}\ r\geq r_0
\]
and the analogous estimates for $v_{ap}$ (these follow directly from their construction). For instance, for $r\geq r_0$, we find that
\[
    \begin{array}{rcl}
      \left(1+|g^{\frac{1}{4}}(r-r_0)|^{1+\gamma} \right)u_{ap}\psi^2 & \leq & C g^{-\frac{1}{4}}  \left(1+|g^{\frac{1}{4}}(r-r_0)|^{1+\gamma} \right)^2e^{-2g^\frac{1}{4}(r-r_0)}\|(\varphi,\psi)\|_1^2 \\
        &   &   \\
        & \leq  & Cg^{-\frac{1}{4}}\|(\varphi,\psi)\|_1^2. \\
    \end{array}
\]
On the other side, for $r\leq r_0$ we have
\[
e^{g^\frac{1}{4}|r-r_0|}u_{ap}\psi^2\leq  Cg^{-\frac{1}{4}}\psi^2 \leq Cg^{-\frac{1}{4}}\|(\varphi,\psi)\|_1^2.\] So, we get
\begin{equation}\label{eqN2}
\|(gu_{ap}\psi^2,0)\|_2\leq  Cg^{\frac{3}{4}}\|(\varphi,\psi)\|_1^2\leq CM^2|\ln g|^{10+2\gamma}g^{-\frac{3}{4}},\end{equation}with $C$ independent of both $M$ and $g$.
Estimating the term $(\bar{\varphi}_2,\bar{\psi}_2)$ that is defined by
\[
\mathcal{L}\left( \begin{array}{c}
   \bar{\varphi}_2 \\
    \bar{\psi}_2
\end{array}
\right)
=\left( \begin{array}{c}
 2 gv_{ap} \psi \varphi \\
    0
\end{array}
\right),
\]coupled with the boundary conditions (\ref{eqDiri}),
with $(\varphi,\psi)\in \mathcal{B}_{M,g}$, is a bit tricky.
This is because, for $r\leq r_0$, we can only show that
\[
ge^{g^\frac{1}{4}|r-r_0|}v_{ap}|\varphi \psi|\leq Cg(g^{-\frac{1}{4}}+|r-r_0|)\|(\varphi,\psi)\|_1^2.
\]
The above estimate and the bound
\[
g\left(1+|g^\frac{1}{4}(r-r_0)|^{1+\gamma} \right)v_{ap}|\varphi \psi|\leq Cg^\frac{3}{4}\left(1+|g^\frac{1}{4}(r-r_0)|^{1+\gamma} \right)
e^{-g^\frac{1}{4}|r-r_0|}\|(\varphi,\psi)\|_1^2\leq Cg^\frac{3}{4}\|(\varphi,\psi)\|_1^2,
\]
which holds for $r\geq r_0$, yield that
\[
\|(gv_{ap}\varphi\psi,0)\|_2\leq g\|(\varphi,\psi)\|_1^2,
\]
which is worse than (\ref{eqN2}) and, as it turns out, not sufficient for our purposes.
Nevertheless, arguing as above, we observe that
\[
\|(gv_{ap}\varphi\psi ,0)\|_0\leq  Cg^{\frac{3}{4}}\|(\varphi,\psi)\|_1^2\leq CM^2|\ln g|^{10+2\gamma}g^{-\frac{3}{4}},\]
with $C$ independent of both $M$ and $g$, where the norm in the lefthand side was defined in the beginning of Remark \ref{remP}. Hence, by the a-priori estimate (\ref{eqPestimSecond}), for any $\rho>0$, we infer that
\begin{equation}\label{eqN3}
\|(\bar{\varphi}_2,\bar{\psi}_2)\|_1\leq C g^\frac{1}{2}\|(\varphi,\psi)\|_1^2+Cg^{\rho+\frac{1}{2}}\|(\varphi,\psi)\|_1^2\leq CM^2|\ln g|^{10+2\gamma}g^{\rho-1},
\end{equation} for some constant $C$ that is independent of both $M$ and $g$.

Concerning the uncoupled terms in the first line of $N$ in (\ref{eqN}), we first note that the following estimate
\[
| f(u_{ap}+\varphi)-f(u_{ap})-f'(u_{ap})\varphi |\leq C \varphi^2
\]
holds. Then, for $r\geq r_0$, we find that
\[
      \left(1+|g^{\frac{1}{4}}(r-r_0)|^{1+\gamma} \right)\varphi^2  \leq  C g^{\frac{1+\gamma}{4}}\varphi^2,\]
while for $r\leq r_0$ we have
\[
e^{g^\frac{1}{4}|r-r_0|} \varphi^2\leq  C\|(\varphi,\psi)\|_1|\varphi| .\]
Thus, we get that
\begin{equation}\label{eqNf}
  \|(f(u_{ap}+\varphi)-f(u_{ap})-f'(u_{ap})\varphi,0)\|_2\leq Cg^{\frac{1+\gamma}{4}}\|(\varphi,\psi)\|_1^2\leq CM^2|\ln g|^{10+2\gamma}g^{\frac{1+\gamma}{4}}g^{-\frac{3}{2}},
\end{equation}where the constant $C$ is independent of both $M$ and $g$.

By virtue of (\ref{eqN-1}),  (\ref{eqN0}), (\ref{eqN3}), and applying Proposition \ref{proLinear} with (\ref{eqN2}),   (\ref{eqNf}) in hand, as well as the corresponding
estimates from the second line of (\ref{eqN}), we infer from (\ref{eqFixed00}) that
\[
\|(\bar{\varphi},\bar{\psi})\|_1\leq C|\ln g|^{5+\gamma}g^{-\frac{3}{4}}+CM^3 |\ln g|^{15+3\gamma}g^{-\frac{3}{2}}+CM^2|\ln g|^{10+2\gamma}g^{\rho-1}+CM^2|\ln g|^{10+2\gamma}g^{\frac{1+\gamma}{4}}g^{-\frac{7}{4}},
\]
holds for any $\rho>0$ and some constant $C$ that is independent of both $M$ and $g$. Thus, choosing first a sufficiently small $\rho>0$, we can then choose a large $M>0$ such that
\[
\|T(\varphi,\psi)\|_1=\|(\bar{\varphi},\bar{\psi})\|_1\leq M|\ln g|^{5+\gamma}g^{-\frac{3}{4}},
\]
provided that $g$ is sufficiently large. In other words, $T$ maps $\mathcal{B}_{M,g}$ into itself.

Working as above, we can also show that $T:\mathcal{B}_{M,g}\to \mathcal{B}_{M,g}$ is a contraction. Hence, by the contraction mapping theorem, we conclude that $T$ has a unique fixed point in $\mathcal{B}_{M,g}$ which provides the desired solution to  (\ref{eqFixed}).

\end{proof}

\subsection{Proof of Theorem \ref{thm}}
Armed with the above information, we are now ready for the proof of Theorem \ref{thm}.
\begin{proof}
  It follows from Proposition \ref{proPerturb} that (\ref{eq2comp}) has a solution $(u,v)$ such that
  \begin{equation}\label{eqApFinale}
  u=u_{ap}+O\left(|\ln g|^{6}g^{-\frac{3}{4}} \right)\ \textrm{and}\ v=v_{ap}+O\left(|\ln g|^{6}g^{-\frac{3}{4}} \right),
  \end{equation}
  uniformly in $\Omega=B_1$, as $g\to +\infty$. The estimates in the theorem follow directly from the main order terms in the construction of the approximate solution $(u_{ap},v_{ap})$.

  It remains to show that $u$ and $v$ are positive. This task will take the rest of the proof. Since $u_0>0$ in $(r_0,1)$ and vanishes in a linear fashion at the endpoints (by Hopf's
  boundary point lemma), we obtain from the global estimate of Theorem \ref{thm}, i.e.,
  \begin{equation}\label{eqAnalog}
  u=u_0+O(g^{-\frac{1}{4}}), \ \textrm{uniformly in}\ (r_0,1),
  \end{equation}
  that there exists an $L>1$ such that
\begin{equation}\label{eqPos1}
u>0\ \textrm{for}\ r\in(r_0+Lg^{-\frac{1}{4}},1-Lg^{-\frac{1}{4}}),
\end{equation}
provided that $g$ is sufficiently large.
From the proof of Proposition \ref{proPerturb}, recalling the construction of the outer approximate solution, we can write
\begin{equation}\label{eqPos1+}
u=u_{ap}+\varphi\geq c(1-r)+\varphi,\ \ r\in \left(\frac{r_0+1}{2},1 \right).
\end{equation}
Since $\varphi$ is uniformly small, the mean value theorem provides an $r_g$ in the above interval such that
$\varphi'(r_g)=o(1)$. In turn, integrating the uniform estimate
 $(r^{N-1}\varphi')'=o(1)$ (recall (\ref{eqFixed})) from $r_g$ to $r$ we get that
\[
\varphi'=o(1) \ \textrm{uniformly in}\ \left(\frac{r_0+1}{2},1 \right).
\]
Then, integrating the above relation from $r$ to $1$ (recall that $\varphi(1)=0$), we infer from (\ref{eqPos1+}) that
\[
u\geq c(1-r),\ \ r\in\left(\frac{r_0+1}{2},1 \right).
\]
So, the relation (\ref{eqPos1}) extends up to $r=1$. On the other side, using the inner estimate of Theorem \ref{thm}, we can extend the aforementioned relation from
$r_0-g^{-\frac{1}{4}}$ to $1$. To complete it in the whole ball $B_1$ we will employ the maximum principle.
 We note that $u$ satisfies a linear equation of the form
\begin{equation}\label{eqanalogII}
-\Delta u+\left(gv^2+p(r)\right)u=0,\ \ |x|<r_0-g^{-\frac{1}{4}},
\end{equation}
where
\[
p(r)=\left\{\begin{array}{ll}
              \frac{f(u)}{u} & \textrm{if}\ u(r)\neq 0, \\
              0 & \textrm{elsewhere}.
            \end{array}
\right.
\]
We point out that, since $f(0)=0$, we have that
\[
|p|\leq C,\ \ |x|<r_0-g^{-\frac{1}{4}}.
\]
Now, since \begin{equation}\label{eqcorresponding}gv^2\geq c g^{\frac{1}{2}}\end{equation} therein (from the analog of (\ref{eqAnalog})) and $u>0$ on $r=r_0-g^{-\frac{1}{4}}$, we deduce by that maximum principle that
\[
u>0\ \ \textrm{in}\ |x|<r_0-g^{-\frac{1}{4}},
\]
as desired.

We can show that $v>0$ in $B_1$ analogously. The main difference is in the vicinity of the boundary $r=1$, where $u$ vanishes and the corresponding relation to (\ref{eqcorresponding}) fails. Nevertheless, thanks to (\ref{eqAnalog}) and the comments leading to it,   there exists a constant $K>1$ such that
\begin{equation}\label{eqPotential}
gu^2\geq g^\frac{1}{2},\ \ r\in (r_0+Kg^{-\frac{1}{4}}, 1-Kg^{-\frac{1}{4}}),
\end{equation}
provided that $g$ is sufficiently large. Near $r=1$ we can apply an argument that is inspired by the maximum principle for domains with small volume.
By  the first assertion of Lemma \ref{lemBlowUP} (which also applies for mixed boundary conditions), after a regular perturbation argument using that
\[u^2=u_{ap}^2+O\left(|\ln g|^{6}g^{-\frac{3}{4}} \right)\ \ (\textrm{recall\ (\ref{eqApFinale})}),
\] we find that the principal eigenvalue $\lambda_1$ of the following eigenvalue problem
\[\left\{\begin{array}{l}
    -{\psi}''-\frac{N-1}{r}{\psi}'+gu^2{\psi}=\lambda \psi,\ \ r\in \left(1- (\ln g)g^{-\frac{1}{4}},1\right), \\
    \psi'\left(1- (\ln g)g^{-\frac{1}{4}}\right)=\psi(1)=0,
  \end{array}\right.
\]
satisfies
\begin{equation}\label{eqlambda}
\lambda_1\geq c g^{\frac{1}{2}}.
\end{equation}
So, for any $\psi \in W_0^{1,2}(I\cup J)$ with \[I=\left(r_0+Kg^{-\frac{1}{4}}, 1-(\ln g)g^{-\frac{1}{4}}\right),\ \ J=\left(1-(\ln g)g^{-\frac{1}{4}},1 \right),\] we  find from (\ref{eqPotential}) and (\ref{eqlambda}) that
\begin{equation}\label{eqsmall}
\int_{I\cup J}^{}\left\{|\nabla\psi|^2+gu^2\psi^2\right\}r^{N-1}  dr\geq cg^\frac{1}{2}\int_{I}^{}\psi^2r^{N-1}  dr+\lambda_1\int_{J}^{}\psi^2r^{N-1}  dr
\geq cg^\frac{1}{2}\int_{I\cup J}^{}\psi^2r^{N-1}  dr.
\end{equation}
In analogy to (\ref{eqanalogII}), we see that $v$ satisfies a linear equation of the form
\begin{equation}\label{eqanalogIII}
-\Delta v+\left(gu^2+q(r)\right)v=0,\ \ r\in\left(r_0+Kg^{-\frac{1}{4}},1\right)\ \textrm{with}\ |q|\leq C.
\end{equation}
Moreover, we have
\[  v(r_0+Kg^{-\frac{1}{4}})>0 \ \textrm{(by the inner estimate of Theorem \ref{thm})}\ \textrm{and} \ v(1)=0.\]
Testing (\ref{eqanalogIII}) by $v^-\in W^{1,2}_0(I\cup J)$  yields
\[
\int_{I\cup J}^{}\left\{|\nabla v^-|^2+gu^2(v^-)^2\right\}r^{N-1}dr=-\int_{I\cup J}^{}q(r)(v^-)^2r^{N-1}dr\leq C \int_{I\cup J}^{}(v^-)^2r^{N-1}dr.
\]
Then, by using (\ref{eqsmall}) with $\psi=v^-\in W^{1,2}_0(I\cup J)$, we conclude that $v^-\equiv 0$ if $g$ is sufficiently large. Finally, the latter relation and the strong maximum principle in (\ref{eqanalogIII}) yield the desired positivity of $v$.
\end{proof}
\section{Applications of the main result}\label{secapplications}

 Let us now give briefly some applications of Theorem \ref{thm}. As it was already pointed out earlier,  in the case $f\equiv h$ and $f$ is odd the limit problem becomes (\ref{eqlimRed}). It is known that when $f(u)=\lambda u-u^{2p+1}$, $\lambda \geq 0$ and
  $p$ is such that \begin{equation}\label{eqExpon}1<2p+1<\frac{N+2}{N-2}\ \textrm{if}\ N\geq 3,\ p>0\ \textrm{if}\ N=2,\end{equation} then   a radial solution $w$ to (\ref{eqlimRed})  is unique and non-degenerate in the radial class provided that
  \begin{itemize}
 \item $w$ is positive, $\lambda \neq 0$ and $\Omega$ is an annulus or the exterior of a ball, see \cite{FelMarTan};
 \item $w$ is positive, $\lambda=0$ and $\Omega$ is a ball or an annulus,  see \cite{Pac};
 \item $w$ is positive, $\lambda \neq 0$ and $\Omega$ is a ball, see \cite{AftPac};
 \item $w$ is a nodal solution with two nodal regions, $\lambda=0$, see \cite{SanPac}.
 \end{itemize}

 We also refer to \cite{harribi,ShioWat} for more general results concerning the function $f$. We point out that such solutions can be shown to exist by variational methods.

  Thanks to these previous results,
 we see that our result applies in the case $f(u)=-u^{2p+1}$ with $p$ as in (\ref{eqExpon}), and $\Omega$ a ball or an annulus. In a related topic, let us point out that when $\Omega$ is the whole $N$-dimensional space, $N\geq 3$, and $f(u)=u-|u|^{p-1}u$ with $1<p<\frac{N+2}{N-2}$ sufficiently close to $\frac{N+2}{N-2}$, Ao, Wei and Yao \cite{AoWeYa} constructed radial solutions with $k\geq 1$ nodes to (\ref{eqlimRed}) that tend to zero as $r\to \infty$. Moreover, they established that their solutions are unique and non-degenerate. Our theorem, with only minor modifications in the proof, can produce a corresponding solution to (\ref{eq2comp}) for large $g$, starting from such a one-node solution.

\medskip

\textbf{Acknowledgements.} The first author is supported by MIS F.4508.14(FNRS) and PDR T.1110.14F(FNRS). This project has received funding from the Hellenic Foundation for
Research and Innovation (HFRI) and the General Secretariat for
Research and Technology (GSRT), under grant agreement No 1889.
The second author would like to thank the mathematics department of
ULB for its hospitality where part of this work was carried out.

\end{document}